\theoremstyle{plain}
\newcommand{\C}{\mathbb{C}}
\newcommand{\F}{\mathscr{F}}
\newcommand{\Pn}{\mathbb{P}}
\newcommand{\sing}{\textsf{Sing}}
\newcommand{\LL}{{\mathscr L}}
\newcommand{\im}{\text{Im}}
\newcommand{\ov}[1]{\mbox{$\overline{#1}$}}
\newcommand{\R}{\mathbb{R}}
\newtheorem{theorem}{Theorem}[section]
\newtheorem{maintheorem}{Theorem}
\newtheorem{proposition}[theorem]{Proposition}
\newtheorem{maincorollary}{Corollary}
\theoremstyle{definition}
\newtheorem{definition}{Definition}[section]
\newtheorem{example}{Example}[section]
\title[Pull-back of singular Levi-flat hypersurfaces]{Pull-back of singular Levi-flat hypersurfaces}
\author{Andr\'es Beltr\'an}
\address[A. Beltr\'an]{Dpto. Ciencias - Secci\'on Matem\'aticas, Pontif\'icia Universidad Cat\'olica del Per\'u.}
\curraddr{Av. Universitaria 1801, San Miguel, Lima 32, Peru}
\email{abeltra@pucp.pe}
\author{Arturo Fern\'andez-P\'erez}
\address[A. Fern\'andez-P\'erez]{Departamento de Matem\'atica - ICEX, Universidade Federal de Minas Gerais, UFMG}
\curraddr{Av. Ant\^onio Carlos 6627, 31270-901, Belo Horizonte-MG, Brasil.}
\email{fernandez@ufmg.br}
\author{Hern\'an Neciosup}
\address[H. Neciosup]{Dpto. Ciencias - Secci\'on Matem\'aticas, Pontif\'icia Universidad Cat\'olica del Per\'u.}
\curraddr{Av. Universitaria 1801, San Miguel, Lima 32, Peru}
\email{hneciosup@pucp.pe}
\thanks{This work was supported by the Pontif\'icia Universidad Cat\'olica del Per\'u project VRI-DGI-2018-0024. The second author is partially supported by CNPq-Brazil Grant Number 302790/2019-5}
\subjclass[2010]{Primary 32V40 - 32S65}
\keywords{Levi-flat subsets - Holomorphic foliations}
\begin{document}

\begin{abstract}
We study singular real analytic Levi-flat subsets invariant by singular holomorphic foliations in complex projective spaces. We give sufficient conditions for a real analytic Levi-flat subset to be the pull-back of a semianalytic Levi-flat hypersurface in a complex projective surface under a rational map or to be the pull-back of a real algebraic curve under a meromorphic function. 
In particular, we give an application to the case of a singular real analytic Levi-flat hypersurface. Our results improve previous ones due to Lebl and Bretas -- Fern\'andez-P\'erez -- Mol.   
\end{abstract}

\maketitle
%\tableofcontents
\section{Introduction and statement of the results}
\par Let $M$ be a complex manifold of $\dim_\C M=N\geq 2$, a closed subset $H\subset M$ is a \textit{real analytic subvariety}  if for every $p\in H$, there are real analytic functions with real values $\varphi_1,\ldots,\varphi_k$ defined in a neighborhood $U\subset M$ of $p$, such that $H\cap U$ is equal to the set where all $\varphi_1,\ldots,\varphi_k$ vanish. A complex subvariety is precisely the same notion, considering holomorphic functions instead of real analytic functions.  We say that a real analytic subvariety $H$ is \textit{irreducible} if whenever we write $H=H_1\cup H_2$ for two subvarieties $H_1$ and $H_2$ of $M$, then either $H_1=H$ or $H_2=H$. If $H$ is irreducible, it has a well-defined dimension $\dim_\R H$. Let $H_{reg}$ denote its \textit{regular part}, i.e., the subset of points near which $H$ is a real analytic submanifold of dimension equal to $\dim_\R H$. A set is \textit{semianalytic} if it is locally constructed from real analytic sets by finite union, finite intersection, and complement. For a real analytic subvariety $H$, the set $\overline{H_{reg}}$ is a  semianalytic subset  where the closure is with the standard topology. In general, the inclusion $\overline{H_{reg}}\subset H$ is proper, which happens, for instance in the \textit{Whitney umbrella}. We really only study the set $\overline{H_{reg}}$, in this sense, we consider $\sing(H):=\overline{H_{reg}}\setminus H_{reg}$ as the  \textit{singular set} of $H$, this is not the usual definition of the singular set in the literature, see for instance \cite{singularlebl}. 
\par If $H\subset M$ is a real analytic hypersurface i.e., a real analytic subvariety of real codimension one, then for each $p\in H_{reg}$, there is a unique complex hyperplane $\LL_p\subset T_pH_{reg}$. This defines a real analytic distribution $p\mapsto \LL_p$ of complex hyperplanes in $TH_{reg}$. When this distribution is \textit{integrable} in the sense of Frobenius, we say that $H$ is \textit{Levi-flat}. Here, $H_{reg}$ is foliated by codimension one immersed complex submanifolds. This foliation, denoted by $\LL$, is known as \textit{Levi foliation}. According to Cartan \cite{cartan}, $\LL$ can be extended to a non-singular holomorphic foliation in a neighborhood of $H_{reg}$ in $M$, but in general, it is not possible to extend $\LL$ to a singular holomorphic foliation in a neighborhood of $H$. There are examples of singular Levi-flat hypersurfaces whose Levi foliations extend to singular \textit{holomorphic webs} in the ambient space, see for instance \cite{generic} and \cite{shafikov}. When there is a singular holomorphic foliation $\F$ in the ambient space $M$ that coincide with the Levi foliation on $H_{reg}$, we say either that $H$ is \textit{invariant} by $\F$ or that $\F$ is \textit{tangent} to $H$. Cerveau and Lins Neto \cite{alcides} proved that germs of singular foliations of codimension one at $(\C^N,0)$ tangent to real analytic Levi-flat hypersurfaces have meromorphic (possibly holomorphic) first integrals. We recall that a non-constant function $f$ is the \textit{first integral} for a foliation $\F$ if each leaf of $\F$ is contained in a level set of $f$. In the global context, the same problem has been studied in \cite{andres} and \cite{projective}.
\par The aim of this paper is to study holomorphic foliations tangent to real analytic Levi-flat subsets in complex manifolds. An irreducible real analytic subvariety $H\subset M$, where $M$ is an $N$-dimensional complex manifold, $N\geq 2$, is a \textit{Levi-flat subset} if it has real dimension $2n+1$ and its regular part $H_{reg}$ is foliated by immersed complex manifolds of complex dimension $n$. Similarly to the case of hypersurfaces, this foliation is called \textit{Levi foliation} of $H$ and will be denoted by $\LL$. The number $n$ is the \textit{Levi dimension} of $H$. We use the qualifier ``\emph{Levi}'' for the foliation, its leaves, and its dimension. Since we deal with real analytic Levi-flat subsets in complex manifolds we shall consider that $H$ is \textit{coherent}. Coherence implies that $H$ admits a global complexification \cite[p. 40]{guaraldo}. Here coherent means
that its ideal sheaf $\mathcal{I}(H)$ in $\mathcal{A}_{\R, M}$, the sheaf of germs of real analytic functions with real values in $M$, is a coherent sheaf of $\mathcal{A}_{\R, M}$-modules. It follows from Oka's theorem \cite[p. 94 Proposition 5]{Narasimhan} that $H$ is coherent if the sheaf $\mathcal{I}(H)$ is \textit{locally finitely generated}, the latter means that for every point $p\in H$ there exists an open neighborhood $U\subset M$ and a finite number of functions $\varphi_j$, real analytic in $U$ and vanishing on $H$, such that for any $q\in U$, the germs of $\varphi_j$ at $q$ generate the ideal $\mathcal{I}(H_q)$, where $H_q$ is the germ of $H$ at $q$. We remark that not every real analytic subset is coherent as we shall see in Section \ref{Levi-flat subset} of this paper. 
\par In \cite{brunella}, singular Levi-flat subsets appear in the result of the lifting of a real analytic Levi-flat hypersurface to the projectivized cotangent bundle of the ambient space through the Levi foliation and in \cite{dicritical}, the authors gave a complete characterization of dicritical singularities of local Levi-flat subsets in terms of their Segre varieties. 
%On the other hand, smooth Levi-flat subsets in complex manifolds  have an interest important by the close relation with open problems in the theory of holomorphic foliations, see for instance \cite{judith}, \cite{cerveau}, \cite{lins} and the references given there. 
\par Let $Y$ be a complex projective surface, $T\subset Y$ be a real analytic Levi-flat hypersurface, $X\subset\mathbb{P}^N$, $N\geq 3$, be a complex projective subvariety of complex dimension $k<N$  and 
$\rho:X\dashrightarrow  Y$ be a dominant rational map. Then it is easy to show that $H=\overline{\rho^{-1}(T)}$ is a real analytic Levi-flat subset in $\mathbb{P}^N$ and so $H$ is a Levi-flat subset defined via pull-back. Therefore, one natural question is:
\begin{center}
\textit{Given a real analytic Levi-flat subset $H\subset\mathbb{P}^N$. Under what condition, $H$ is given by the pull-back of a Levi-flat hypersurface in a projective complex surface via a rational map?}
\end{center}
\par In \cite{lebl}, Lebl gave sufficient conditions for a real analytic Levi-flat hypersurface in $\mathbb{P}^N$ to be a pull-back of a real algebraic curve in $\C$ via a meromorphic function. In \cite{bretas}, Bretas et al. proved an analogous result for real analytic Levi-flat subsets in $\mathbb{P}^N$. The main hypothesis in these articles is that the Levi foliation has infinitely many \textit{algebraic leaves}. In this paper, we give an answer to the question, assuming that $H$ is invariant by a singular holomorphic foliation on $\Pn^N$ with \textit{quasi-invariant subvarieties} (see 
Section \ref{quasi}). An irreducible complex subvariety $S\subset X$ of complex dimension $n$ is \textit{quasi-invariant} by a global $n$-dimensional foliation $\F$ on a complex projective manifold $X$ if it is not $\F$-invariant, but the restriction to the foliation $\F$ to $S$ is an \textit{algebraically integrable foliation} of dimension $n-1$, i.e. every leaf of $\F|_{S}$ is algebraic. 
The concept of \textit{quasi-invariant subvarieties} was introduced by Pereira-Spicer \cite{jorge} for codimension one holomorphic foliations on complex projective manifolds to prove a variant of the classical Darboux-Jouanolou Theorem. Here we shall use this concept for Levi foliations to prove our main result:
\begin{maintheorem}\label{Theorem_1}
Let $H\subset\mathbb{P}^N$, $N\geq 3$, be an irreducible real analytic Levi-flat subset of Levi dimension $n$ invariant by an $n$-dimensional singular holomorphic foliation $\F$ on $\mathbb{P}^N$. Suppose that $H$ is coherent and $n>N/2$. If the Levi foliation has infinitely many quasi-invariant subvarieties of complex dimension $n$, then there exists a unique projective subvariety $X$ of complex dimension $n+1$ containing $H$ such that either there exists a rational map $R:X\dashrightarrow\Pn^{1}$, and real algebraic curve $C\subset\Pn^{1}$ such that $\overline{H_{reg}}\subset\overline{R^{-1}(C)}$ or there exists a dominant rational map $\rho:X\dashrightarrow Y$ on a projective surface $Y$ and a semianalytic Levi-flat subset $T\subset Y$ such that $\overline{H_{reg}}\subset\overline{\rho^{-1}(T)}$.
\end{maintheorem}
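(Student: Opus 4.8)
The plan is to combine the extension/algebraization machinery for Levi-flat subsets with the structure theory of quasi-invariant subvarieties in the spirit of Pereira--Spicer. First I would set up the complexification: since $H$ is coherent it admits a global complexification $H_\C\subset\Pn^N$, an irreducible complex analytic (hence, by Chow, algebraic) subvariety of complex dimension $2n+1$, with $H$ sitting as the fixed-point set of an anti-holomorphic involution on it. The Levi foliation $\LL$ on $H_{reg}$ has leaves of complex dimension $n$; because $H$ is invariant by the $n$-dimensional foliation $\F$ on $\Pn^N$ and $\F|_{H_{reg}}=\LL$, the Segre-type correspondence attaches to each quasi-invariant subvariety $S$ of $\LL$ a genuine algebraic subvariety of $H_\C$. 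The hypothesis $n>N/2$ is what forces any two such $n$-dimensional subvarieties in $\Pn^N$ to intersect, and more importantly it constrains the dimension $2n+1$ of $H_\C$ and gives the rigidity needed to run the finiteness-versus-algebraic-integrability dichotomy.

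Next I would produce the ambient variety $X$. Take the closure $X$ of the union of the complex leaves (equivalently, the Zariski closure in $\Pn^N$ of the complexified Levi leaves through points of $H_{reg}$); one shows $X$ is an irreducible projective subvariety of complex dimension $n+1$ containing $\overline{H_{reg}}$, and that $\F$ restricts to a foliation $\F|_X$ by curves whose leaves are (the analytic continuations of) the Levi leaves. Uniqueness of $X$ follows because any $(n+1)$-dimensional subvariety containing $H$ must contain all the complex leaves through $H_{reg}$, hence contains $X$, and by irreducibility equals it. Now the quasi-invariant $n$-dimensional subvarieties $S$ of $\LL$ become, inside $X$, hypersurfaces on which $\F|_X$ restricts to an algebraically integrable foliation of dimension $n-1$; since $\dim X=n+1$ these are exactly the quasi-invariant hypersurfaces for the codimension-one foliation on $X$ defined by $\F|_X$.

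Then I would invoke the Pereira--Spicer dichotomy (the Darboux--Jouanolou-type statement): a codimension-one foliation on a projective manifold with infinitely many quasi-invariant hypersurfaces either admits a rational first integral $R:X\dashrightarrow\Pn^1$, or there is a dominant rational map $\rho:X\dashrightarrow Y$ to a surface with the foliation pulled back from a foliation on $Y$. (A preliminary resolution of singularities of $X$ and of the foliation may be needed so that the theorem applies; one then pushes the conclusion back down to $X$.) In the first case, the real structure forces the fibers of $R$ containing Levi leaves to trace out a one-real-parameter family, i.e. the Levi leaves lie over a real algebraic curve $C\subset\Pn^1$, giving $\overline{H_{reg}}\subset\overline{R^{-1}(C)}$. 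In the second case, $H$ being $\F$-invariant of the right dimension means $\rho(H_{reg})$ is contained in a real analytic subset $T\subset Y$ of real dimension $3$ whose regular part is foliated by the images of the Levi leaves, so $T$ is Levi-flat; a finiteness/semianalyticity argument (the image of a semianalytic set under a proper real analytic map, after resolving indeterminacy) shows $T$ is semianalytic, and $\overline{H_{reg}}\subset\overline{\rho^{-1}(T)}$.

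I expect the main obstacle to be the passage from ``infinitely many quasi-invariant subvarieties of the Levi foliation $\LL$ on $H_{reg}$'' to ``infinitely many quasi-invariant hypersurfaces of an honest algebraic codimension-one foliation on the projective variety $X$'': one must show that each such real-analytic $S$ complexifies to an algebraic hypersurface of $X$ (controlling degrees so that infinitely many distinct ones really arise, rather than finitely many accumulating), that $\F|_X$ is genuinely a foliation and not merely a web, and that algebraic integrability of $\LL|_S$ transfers to algebraic integrability of $\F|_X$ restricted to the complexification. The condition $n>N/2$ should be used precisely here, to guarantee the complexified leaves fill out a variety of the predicted dimension and that the relevant intersections are nonempty, so that the Pereira--Spicer hypotheses are met. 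The descent of the conclusion through the resolution of singularities, and checking that the real structure is respected at each stage, is the other delicate point.
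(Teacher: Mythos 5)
Your overall strategy coincides with the paper's: produce the $(n+1)$-dimensional intrinsic complexification $X=H^{\imath}$, observe that $\F|_X$ is a codimension-one foliation on $X$ admitting infinitely many quasi-invariant hypersurfaces, apply the Pereira--Spicer dichotomy after desingularizing $X$, and then treat the two branches (a rational first integral yielding a real algebraic curve; a pull-back to a surface yielding a semianalytic Levi-flat $T$ obtained by saturating the images of the Levi leaves along a transversal and gluing). Both case analyses you sketch are the ones the paper carries out, the first via Proposition 6.3 of Bretas--Fern\'andez-P\'erez--Mol and the second by essentially the gluing construction you describe.

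The genuine gap is the algebraization of $X$, and the mechanism you propose for it would fail. You begin by asserting that the global complexification $H_{\C}$ is a subvariety of $\Pn^N$ of complex dimension $2n+1$, ``hence, by Chow, algebraic''; but under the standing hypothesis $n>N/2$ one has $2n+1>N$, so no such subvariety of $\Pn^N$ exists (the complexification lives in a larger ambient space), and ordinary Chow cannot be applied to the intrinsic complexification either, since $H^{\imath}$ is a priori only an analytic subvariety of a neighborhood $V$ of $H$, not a closed analytic subvariety of all of $\Pn^N$. Likewise, taking ``the Zariski closure of the complexified Levi leaves'' begs the question: without knowing that the local variety $H^{\imath}$ is contained in an algebraic variety of the same dimension, nothing bounds the dimension of that closure. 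The paper closes this gap with Chow's extension theorem for analytic subvarieties in a neighborhood of an algebraic one: a quasi-invariant subvariety $L$ of the Levi foliation is by definition a complex algebraic subvariety contained in $H^{\imath}$, and since $\dim_{\C}H^{\imath}=n+1>N-\dim_{\C}L$ precisely because $n>N/2$, the germ $H^{\imath}$ extends to an algebraic subvariety of $\Pn^N$. This is the actual role of the hypothesis $n>N/2$, which you instead attribute to intersection-theoretic considerations and to the (nonexistent) dimension of $H_{\C}$. A minor slip besides: $\F|_X$ is a codimension-one foliation on $X$ with leaves of complex dimension $n$, not a foliation by curves, though you use it correctly afterwards.
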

\par We emphasize that the hypothesis $n>N/2$ implies that $H$ is necessarily a real analytic subvariety with singularities. In fact, Ni-Wolfson \cite[Theorem 2.4]{Ni} proved that no nonsingular real analytic Levi-flat subset of the Levi dimension $n$ exist in $\Pn^N$, $n>N/2$.

\par Applying Theorem \ref{Theorem_1} to $n=N-1$, we get the following corollary:
\begin{maincorollary}\label{corollary_1}
Let $H\subset\mathbb{P}^N$, $N\geq 3$, be an irreducible coherent real analytic Levi-flat hypersurface invariant by a codimension one holomorphic foliation $\F$ on $\mathbb{P}^N$. If the Levi foliation has infinitely many quasi-invariant complex hypersurfaces, then either there exists a rational map $R:\Pn^N\dashrightarrow\Pn^{1}$, and real algebraic curve $C\subset\Pn^{1}$ such that $\overline{H_{reg}}\subset\overline{R^{-1}(C)}$ or there exists a dominant rational map $\rho:\Pn^N\dashrightarrow Y$ on a projective surface $Y$ and a semianalytic Levi-flat subset $T\subset Y$ such that $\overline{H_{reg}}\subset\overline{\rho^{-1}(T)}$.
\end{maincorollary}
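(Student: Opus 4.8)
The plan is to deduce Corollary 1 directly from Theorem 1 by specializing to the case $n = N-1$. The key observation is that a real analytic Levi-flat hypersurface $H \subset \Pn^N$ has real dimension $2(N-1)+1 = 2N-1$, hence Levi dimension $n = N-1$, and that the inequality $n > N/2$ is automatically satisfied precisely when $N \geq 3$: indeed $N-1 > N/2 \iff N > 2$. Thus the hypotheses of Theorem 1 are met as soon as $H$ is coherent and invariant by the codimension one foliation $\F$ (a codimension one foliation on $\Pn^N$ is the same as an $(N-1)$-dimensional foliation), and the "infinitely many quasi-invariant subvarieties of complex dimension $n$" condition becomes "infinitely many quasi-invariant complex hypersurfaces."

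First I would verify the dimension bookkeeping explicitly: check that "codimension one holomorphic foliation $\F$ on $\Pn^N$" coincides with "$(N-1)$-dimensional singular holomorphic foliation," and that a quasi-invariant subvariety of complex dimension $n = N-1$ is exactly a quasi-invariant complex hypersurface, so the hypotheses of the corollary translate verbatim into those of Theorem 1. Then I would apply Theorem 1 to obtain a unique projective subvariety $X \subset \Pn^N$ of complex dimension $n+1 = N$ containing $H$. The only remaining point is that a complex projective subvariety of $\Pn^N$ of dimension $N$ must equal $\Pn^N$ itself (since $\Pn^N$ is irreducible of dimension $N$, any closed subvariety of the same dimension is the whole space). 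Substituting $X = \Pn^N$ into the conclusion of Theorem 1 yields exactly the two alternatives in the statement of Corollary 1: either a rational map $R : \Pn^N \dashrightarrow \Pn^1$ with a real algebraic curve $C \subset \Pn^1$ such that $\overline{H_{reg}} \subset \overline{R^{-1}(C)}$, or a dominant rational map $\rho : \Pn^N \dashrightarrow Y$ onto a projective surface $Y$ with a semianalytic Levi-flat subset $T \subset Y$ such that $\overline{H_{reg}} \subset \overline{\rho^{-1}(T)}$.

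There is essentially no obstacle here beyond the elementary dimension count $X = \Pn^N$; the corollary is a pure specialization. The one subtlety worth spelling out in the write-up is the equivalence of the "quasi-invariant hypersurface" terminology with the dimension-$n$ quasi-invariant subvariety terminology of Section \ref{quasi}, namely that for a hypersurface $S$ of dimension $N-1$ being quasi-invariant by $\F$ means $S$ is not $\F$-invariant while $\F|_S$ is algebraically integrable of dimension $N-2$; this is literally the $n = N-1$ instance of the general definition, so no additional argument is needed.
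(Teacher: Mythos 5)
Your proposal is correct and follows exactly the paper's own argument: the paper likewise observes that a Levi-flat hypersurface has Levi dimension $n=N-1$, that $N-1>N/2$ is equivalent to $N>2$, applies Theorem \ref{Theorem_1}, and concludes $X=\Pn^N$ from the dimension count. No differences worth noting.
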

\par When $H$ is a real analytic hypersurface, the above corollary gives a nice characterization of coherent real analytic Levi-flat hypersurfaces in $\Pn^N$, $N\geq 3$, invariant by codimension one holomorphic foliations which admit infinitely many quasi-invariant complex hypersurfaces. Observe that, in order to improve our results, we need to extend the Levi foliation of a Levi-flat subset to a holomorphic foliation in the ambient space. Therefore, another interesting question is:
\begin{center}
\textit{Given a real analytic Levi-flat subset $H\subset\mathbb{P}^N$ with Levi foliation $\mathcal{L}$. Under what condition, $\mathcal{L}$ extend to a singular holomorphic foliation on $\Pn^N$?}
\end{center}
\par When $H$ is a local real analytic Levi-flat hypersurface, Lebl solved the above question in the non-dicritical case in \cite{singularlebl}. 
\par The paper is organized as follows: in Section \ref{quasi}, we define the concept of quasi-invariant subvarieties of a foliation with complex leaves and state the main result of \cite{jorge}, such a result is key to prove Theorem \ref{Theorem_1}. Section \ref{Levi-flat subset} is devoted to the study of real analytic Levi-flat subset in complex manifolds, using some results of \cite{brunella} and \cite{bretas}, we prove the algebraic extension of the intrinsic complexification of $H$. In Section \ref{teorema_sec}, we prove Theorem \ref{Theorem_1} and in Section \ref{coro_sec} we prove Corollary \ref{corollary_1}. Finally, in Section \ref{example}, we give two examples. The first is an example of a Levi-flat hypersurface where Theorem \ref{Theorem_1} applies. In the second example, 
we construct a Levi-flat hypersurface in $\Pn^3$ that is not a pull-back of a Levi-flat hypersurface of $\Pn^2$ under a rational map. Moreover, this example also is not a pull-back of a real algebraic curve under a meromorphic function.

\section{Foliations with complex leaves and quasi-invariant subvarieties}\label{quasi}
\subsection{Foliations with complex leaves}
A \textit{foliation with complex leaves of complex dimension $n$} is a smooth foliation $\mathcal{G}$ of dimension $2n$ whose local models are domains $U=W\times B$ of $\C^n\times\mathbb{R}^{k}$, $W\subset\C^n$, $B\subset\mathbb{R}^k$ and whose local transformations are of the form 
\begin{eqnarray}\label{coordinates}
\varphi(z,t)=(f(z,t),h(t)),
\end{eqnarray}
where $f$ is holomorphic with respect to $z$. A domain $U$ as above is said to be a \textit{distinguished coordinate domain} of $\mathcal{G}$ and $z=(z_1,\ldots,z_n)$, $t=(t_1,\ldots,t_k)$ are said to be \textit{distinguished local coordinates}. As examples of such foliations we have the Levi foliations of Levi-flat hypersurfaces of $\C^n$, see for instance \cite{sad} and \cite{levi}. 
\par If we replace $\mathbb{R}^k$ by $\C^k$ and in (\ref{coordinates}) we assume $t\in\C^k$ and that $f,h$ are holomorphic with respect to $z$, $t$ then we get the notion of \textit{holomorphic foliation} of \textit{complex codimension} $k$. 
\par Now we define foliations with singularities. Let $M$ be a complex manifold. A \textit{singular foliation with complex leaves $\mathcal{G}$ of dimension $n$} on $M$ is a foliation with complex leaves of dimension $n$ on $M\setminus E$, where $E$ is a real analytic subvariety of $M$ of real dimension $<2n$. A point $p\in E$ is called a \textit{removable singularity} of $\mathcal{G}$ of there is a chart $(U,\varphi)$ around $p$, compatible with the atlas $\mathcal{A}$ of $\mathcal{G}$ restricted to $M\setminus E$, in the sense that $\varphi\circ\varphi^{-1}_{i}$ and $\varphi_i\circ\varphi^{-1}$ have the form (\ref{coordinates}) for all $(U_i,\varphi_i)\in\mathcal{A}$ with $U\cap U_i\neq\emptyset$. The set of non-removable singularities of $\mathcal{G}$ in $E$ is called the \textit{singular set} of $\mathcal{G}$, and is denoted by $\sing(\mathcal{G})$.
\subsection{Quasi-invariant subvarieties}
\par Let $Z$ be a projective manifold of complex dimension $N\geq 2$ and let $\mathscr{G}$ be a foliation with complex leaves of dimension $n$ on $Z$. 
\begin{definition}
We say that $\mathscr{G}$ is an \textit{algebraically integrable foliation} on $Z$ if every leaf of $\mathscr{G}$ is algebraic, i.e. every leaf of $\mathscr{G}$ is a projective complex subvariety in $Z$.
\end{definition}
Motivated by \cite{jorge}, we define the concept of a \textit{subvariety quasi-invariant} by a real analytic foliation with complex leaves. 
\begin{definition}
An irreducible subvariety $S\subset Z$ of complex dimension $n$ is \textit{quasi-invariant} by a foliation $\mathscr{G}$ if it is not $\mathscr{G}$-invariant, but the restriction of the foliation $\mathscr{G}$ to $S$ is an algebraically integrable foliation. 
\end{definition}
We note that the restriction foliation $\mathscr{G}|_{S}$ is a codimension one foliation on $S$ and when $\mathscr{G}|_{S}$ is an algebraically integrable foliation, we have that every leaf of $\mathscr{G}|_{S}$ are projective complex hypersurfaces in $S$.
Codimension one holomorphic foliations on $Z$ which admit infinitely many quasi-invariant hypersurfaces have been studied in \cite{jorge} and its main result is the following.
\begin{theorem}[Pereira-Spicer \cite{jorge}]\label{teo_jorge}
Let $\F$ be a codimension one holomorphic foliation on a projective manifold $Z$. If $\F$ admits infinitely many quasi-invariant hypersurfaces then either $\F$ is an algebraically integrable foliation, or $\F$ is a pull-back of a foliation of dimension one on a projective surface under a dominant rational map. 
\end{theorem}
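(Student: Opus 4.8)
The statement is a ``quasi-invariant'' refinement of the classical Darboux--Jouanolou theorem, and the plan is to follow the architecture of that argument, the genuinely new ingredient being that each quasi-invariant hypersurface carries its own first integral. Fix a rational $1$-form $\omega$ defining $\F$. For a quasi-invariant hypersurface $S\subset Z$, the induced foliation $\F|_{S}$ is a codimension one foliation on $S$ which is algebraically integrable; hence, working on a resolution of $S$, it admits a rational first integral $f_{S}\colon S\dashrightarrow C_{S}$ onto a curve, whose fibres are precisely the closures of the traces $L\cap S$ of the leaves $L$ of $\F$. The task is to compare this data across the infinitely many quasi-invariant hypersurfaces and extract a global structure.

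The first step is to promote ``infinitely many'' into an algebraic family. To each quasi-invariant $S=\{g=0\}$ one attaches suitable projective data whose degree is controlled by $\deg\omega$---this is the analogue of the finite dimensional space of cofactors that drives the classical proof---so that all but finitely many of the quasi-invariant hypersurfaces lie in bounded-degree components of the Chow variety of $Z$. From this one extracts the following alternative: either $\F$ already admits a rational first integral, hence is algebraically integrable and we are done; or there is an irreducible curve $B$ and a $1$-dimensional algebraic family $\{S_{t}\}_{t\in B}$ of quasi-invariant hypersurfaces sweeping out $Z$, which after a Stein-type factorization we may take to be the fibres of a dominant rational map $\pi\colon Z\dashrightarrow B$. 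Rigidifying the first integrals $f_{t}:=f_{S_{t}}$ so that they depend rationally on $t$---a priori each is defined only up to a birational automorphism of its target curve---one forms
\[
\psi\colon Z\dashrightarrow Y,\qquad \psi(p)=\bigl(\pi(p),\,f_{\pi(p)}(p)\bigr),
\]
where $Y$ is a projective model of the total space $\{(t,c):t\in B,\ c\in C_{t}\}$; it is a surface fibred over the curve $B$.

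It remains to recognise $\F$ as a pull-back under $\psi$. A general fibre of $\psi$ is a fibre of some $f_{t}$, i.e. a trace $\overline{L\cap S_{t}}$; it has dimension $N-2$ and lies inside a single leaf of $\F$. Restricting $\psi$ to a leaf $L$, its nonempty fibres are again of the form $\overline{L\cap S_{t}}$, so $\psi(L)$ is a curve in $Y$, and through a general point of $Y$ there passes exactly one such curve. Hence the curves $\overline{\psi(L)}$ are the leaves of a one-dimensional foliation $\mathcal{H}$ on $Y$, and $\psi^{-1}$ of an $\mathcal{H}$-leaf is an $\F$-leaf, i.e. $\F=\psi^{\ast}\mathcal{H}$: this is the second alternative. (If $\mathcal{H}$ turns out to be algebraically integrable, pulling back a rational first integral of $\mathcal{H}$ by $\psi$ gives one for $\F$, so we fall into the first alternative; the two cases are thus exhaustive.)

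The step I expect to be the main obstacle is the boundedness that produces the algebraic family $\{S_{t}\}$: as in every Darboux--Jouanolou-type theorem, without an a priori degree bound on the special hypersurfaces there is nothing on which to run the argument, and here that bound must be squeezed out of the algebraic integrability of the \emph{restricted} foliations rather than from honest invariance, which is the delicate point. The secondary difficulties are the rigidification of $\{f_{t}\}_{t\in B}$ into a single rational map on the total space of $\pi$, and a separate treatment of the degenerate configurations---for instance infinitely many quasi-invariant hypersurfaces through a general point of $Z$, or $1$-dimensional subfamilies that fail to sweep out $Z$---which must be shown to reduce to one of the two stated conclusions.
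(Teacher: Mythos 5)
The paper does not prove this statement: it is quoted verbatim from Pereira--Spicer \cite{jorge} and used as a black box (it is the external input that drives Proposition \ref{singular_H} and hence Theorem \ref{Theorem_1}). So there is no internal proof to compare your attempt against; what can be assessed is whether your sketch would actually establish the Pereira--Spicer theorem.

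It would not, as written, because the step you yourself flag as ``the main obstacle'' is precisely where all of the content lies, and the mechanism you propose for it is unavailable. In the classical Darboux--Jouanolou argument the degree bound comes from the cofactor: an invariant hypersurface $\{g=0\}$ satisfies $\omega\wedge dg = g\,\Theta$ with $\Theta$ ranging in a fixed finite-dimensional space, and infinitely many such relations force a linear dependence. A quasi-invariant hypersurface is by definition \emph{not} invariant, so it has no cofactor, and no analogue of this finite-dimensional space is produced by your argument; saying the bound ``must be squeezed out of the algebraic integrability of the restricted foliations'' names the difficulty without resolving it. The actual proof in \cite{jorge} does not follow this route: the useful consequence of quasi-invariance is that each leaf $L$ of $\F$ meeting $S$ contains the algebraic codimension-two subvariety $\overline{L\cap S}$, so infinitely many quasi-invariant hypersurfaces force a general leaf of $\F$ to be swept out by positive-dimensional algebraic subvarieties; one then invokes a factorization theorem for foliations admitting an algebraically integrable subfoliation of codimension two (together with an analysis of the tangency divisor between $\F$ and the family) to produce the rational map to a surface. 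Your second half --- assembling $\pi$ and the fibrewise first integrals $f_t$ into $\psi\colon Z\dashrightarrow Y$ and recognising $\F$ as $\psi^{*}\mathcal{H}$ --- is a reasonable description of the endgame once the algebraic family and the subfoliation structure are in hand, but without the first half the proposal is an outline of the statement rather than a proof of it.
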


\section{Real analytic subsets}\label{Levi-flat subset}
\subsection{Coherent real analytic subsets.} We present some of the fundamental results concerning coherent real analytic subsets. 
\par Let $H$ be a real analytic subset in an open set $U\subset\mathbb{C}^{n}$ and let $\mathcal{I}(H)$ be its ideal sheaf, it is the sheaf of germs of real analytic functions with real values vanishing on $H$. 
\begin{definition}
$H$ is said to be coherent if $\mathcal{I}(H)$ is a coherent sheaf of $\mathcal{A}_{\R,U}$-modules, where $\mathcal{A}_{\R,U}$ is the sheaf of germs of real analytic functions with real values in $U$.
\end{definition}
\begin{proposition}\cite[p. 95]{Narasimhan}\label{nara}
If $H$ is a coherent real analytic subset and the germ $H_p$ of $H$ at $p$ is irreducible, then for $q$ near $p$, we have $$\dim_{\mathbb{R}} H_p=\dim_{\mathbb{R}} H_q.$$
\end{proposition}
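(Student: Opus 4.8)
The plan is to reduce the assertion to the analogous (and classical) fact for \emph{complex} analytic germs, namely that an irreducible complex analytic germ is pure‑dimensional, by passing to the complexification of $H$. The whole point is that this complexification is available, and behaves well from point to point, precisely because $H$ is assumed coherent.

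First I would use coherence to shrink to a neighborhood $U\subset\mathbb{C}^{n}$ of $p$ on which $\mathcal{I}(H)$ is \emph{finitely generated in the strong sense}: there are real analytic functions $\varphi_1,\dots,\varphi_k$ on $U$ such that for every $q\in U$ the germs $(\varphi_j)_q$ generate the stalk $\mathcal{I}(H_q)$. Viewing $\mathbb{C}^{n}\cong\mathbb{R}^{2n}$ as a maximal totally real subspace of $\mathbb{C}^{2n}$ (equivalently, doubling the variables and their conjugates), each $\varphi_j$ extends holomorphically to a neighborhood $\widetilde U\subset\mathbb{C}^{2n}$ of $U$, and I set $\widetilde H:=\{\widetilde\varphi_1=\dots=\widetilde\varphi_k=0\}\subset\widetilde U$. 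By construction, for every $q\in H\cap U$ the germ $\widetilde H_q$ is exactly the complexification of the germ $H_q$: the uniform generators $\varphi_j$ of $\mathcal{I}(H_q)$ complexify to generators of $\mathcal{I}(\widetilde H_q)$. This is the step where coherence is used essentially — without it the germwise complexifications need not fit together into one complex analytic set, and the real dimension can genuinely jump (as in the Whitney umbrella type examples).

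Next I would invoke the standard properties of complexifications of coherent real analytic germs (Cartan; see \cite{Narasimhan}): complexification preserves dimension, so $\dim_{\mathbb{R}}H_q=\dim_{\mathbb{C}}\widetilde H_q$ for all $q\in H\cap U$, and it respects the irreducible decomposition, so from the irreducibility of $H_p$ we get that the complex analytic germ $\widetilde H_p$ is irreducible. Now apply the local parametrization theorem for complex analytic germs (an irreducible germ is a finite branched cover of a polydisc, hence pure‑dimensional): there is a representative of $\widetilde H$ near $p$ all of whose points $q$ satisfy $\dim_{\mathbb{C}}\widetilde H_q=\dim_{\mathbb{C}}\widetilde H_p$. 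Chaining the three equalities, for $q\in H$ close to $p$,
\[
\dim_{\mathbb{R}} H_q=\dim_{\mathbb{C}}\widetilde H_q=\dim_{\mathbb{C}}\widetilde H_p=\dim_{\mathbb{R}} H_p,
\]
which is the claim.

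The only genuinely delicate point is the middle paragraph: one must know that for coherent $H$ the locally defined complexification agrees germ by germ with the intrinsic complexification (this is the force of having uniform generators) and that both dimension and irreducibility transfer to $\widetilde H$. Granting those inputs, the complex‑analytic conclusion — pure‑dimensionality of irreducible germs — is classical, and the proof closes at once. (If one wants to avoid citing pure‑dimensionality for the lower bound, one can instead use that $\widetilde H_p$ irreducible makes $\widetilde H_{reg}$ connected and dense near $p$; but invoking the local parametrization theorem is cleaner, and it handles both inequalities simultaneously.)
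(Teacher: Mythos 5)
The paper gives no proof of this proposition---it is quoted directly from \cite{Narasimhan}, p.~95---so there is no internal argument to compare against; your proof is correct and is essentially the classical one found in that reference. You correctly identify all the key inputs and where coherence is genuinely used: uniform local generators of $\mathcal{I}(H)$ make the holomorphic extensions $\widetilde\varphi_j$ cut out the complexification of $H_q$ simultaneously for every $q$ near $p$ (this is exactly what fails for the non-coherent cone example in the paper), after which preservation of dimension and irreducibility under complexification plus pure-dimensionality of an irreducible complex analytic germ close the argument.
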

\par It is well known that locally, a real analytic subset always admits a complexification (see for instance \cite[p. 40]{guaraldo}) and it is not true  for global real analytic subsets. It is shown in \cite[p. 54]{guaraldo} that the global complexification of a coherent real analytic subset in a complex manifold always exists.  
\begin{theorem}\cite[p. 54]{guaraldo}
A real analytic subset in a complex manifold is coherent if and only if it admits a global complexification.
\end{theorem}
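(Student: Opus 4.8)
The plan is to prove both implications, using that the local existence of a complexification of a real analytic germ is classical \cite[p. 40]{guaraldo}; the content lies in the passage from local to global, and coherence is exactly what makes this work. Throughout I would fix a complexification $\widetilde M$ of the underlying real analytic manifold of $M$, so that $\widetilde M$ is a complex manifold containing $M$ as a totally real submanifold and restriction to $M$ identifies the germs of $\mathcal{O}_{\widetilde M}$ along $M$ with the germs of complex-valued real analytic functions on $M$; for $p\in H$ I write $\widetilde{H_p}\subset\widetilde M$ for the local complexification of the germ $H_p$. By a \emph{global complexification} of $H$ I mean a complex analytic subset $\widetilde H$ of a neighborhood of $M$ in $\widetilde M$ with $(\widetilde H)_p=\widetilde{H_p}$ for every $p\in H$.

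For the implication ``global complexification $\Rightarrow$ coherent'', I would start from the fact that the ideal sheaf $\mathcal{I}(\widetilde H)\subset\mathcal{O}_{\widetilde M}$ is coherent by Oka's theorem. Fixing $p\in H$ and holomorphic generators $g_1,\dots,g_r$ of $\mathcal{I}(\widetilde H)$ near $p$, I would restrict to $M$ and split $g_j=\varphi_j+\sqrt{-1}\,\psi_j$ into real analytic real and imaginary parts; using that $(\widetilde H)_q=\widetilde{H_q}$ for all nearby $q$, one checks that $\varphi_1,\psi_1,\dots,\varphi_r,\psi_r$ generate $\mathcal{I}(H_q)$ for every such $q$, and that the module of real analytic relations among them descends from the (coherent) module of holomorphic relations among $g_1,\dots,g_r$. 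Hence $\mathcal{I}(H)$ is locally finitely generated with coherent relations, i.e. coherent.

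For the converse, ``coherent $\Rightarrow$ global complexification'', I would use Oka's theorem \cite[p. 94]{Narasimhan} to get that $\mathcal{I}(H)$ is locally finitely generated: around each $p\in H$ there are a neighborhood $U_p\subset M$ and real analytic functions $\varphi^{p}_1,\dots,\varphi^{p}_{k_p}$ vanishing on $H$ whose germs generate $\mathcal{I}(H_q)$ for all $q\in U_p$. Extending each $\varphi^{p}_j$ to a holomorphic function $\widetilde\varphi^{p}_j$ on a neighborhood $\widetilde U_p\subset\widetilde M$ of $U_p$, set $Z_p=\{\widetilde\varphi^{p}_1=\dots=\widetilde\varphi^{p}_{k_p}=0\}$. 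The decisive step is to prove $(Z_p)_q=\widetilde{H_q}$ for every $q\in U_p\cap H$: since the $\varphi^{p}_j$ generate the ideal of the coherent germ $H_q$, their holomorphic extensions cut out exactly the local complexification, with complex dimension equal to $\dim_\R H_q$ (here Proposition \ref{nara} and coherence exclude the jump in the dimension of the local complexification that occurs for non-coherent sets such as the Whitney umbrella, where the extensions of generators of the ideal acquire a spurious component). Granting this, for overlapping $U_p,U_q$ the complex analytic sets $Z_p$ and $Z_q$ have the same germ at every point of $U_p\cap U_q\cap H$; since the locus where two complex analytic subsets of $\widetilde M$ have equal germs is open, after passing to a locally finite refinement and shrinking the $\widetilde U_p$ one may assume $Z_p=Z_q$ on $\widetilde U_p\cap\widetilde U_q$, and the $Z_p$ glue to a complex analytic subset $\widetilde H$ of a neighborhood of $M$ in $\widetilde M$ which, by construction, is a global complexification of $H$.

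The main obstacle is precisely the decisive step in the converse: showing that local finite generation of $\mathcal{I}(H)$ forces the holomorphic extensions of a generating set to define \emph{exactly} the local complexification at \emph{every} nearby point, with no jump in dimension and no extra components. Everything else --- extending real analytic functions to holomorphic ones and patching --- is routine once this is known, and this is where the hypothesis of coherence is used in an essential way, the Whitney umbrella being the standard illustration of what goes wrong without it.
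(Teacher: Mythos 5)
The paper does not prove this statement: it is quoted verbatim from Guaraldo--Macr\`i--Tancredi \cite{guaraldo} and used as a black box, so there is no internal proof to compare yours against. Your reconstruction is, in outline, the standard Cartan-style argument that the cited reference gives, and it is essentially sound; let me only recalibrate where the weight actually sits. First, your ``decisive step'' is lighter than you make it sound: once one grants the basic local fact that the complexification $\widetilde{H_q}$ of a germ is the zero germ of the extended ideal $\mathcal{I}(H_q)\cdot\mathcal{O}_{\widetilde M,q}$ (equivalently, that $\mathcal{I}(\widetilde{H_q})=\mathcal{I}(H_q)\cdot\mathcal{O}_{\widetilde M,q}$ --- this is the nontrivial input from the \emph{local} theory, not something coherence supplies), the identity $(Z_p)_q=\widetilde{H_q}$ is immediate from the fact that the same $\varphi^p_j$ generate $\mathcal{I}(H_q)$ at every nearby $q$, which is exactly what local finite generation via Oka's theorem gives; the appeal to Proposition \ref{nara} and to dimension counts is decorative rather than load-bearing. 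Second, the genuinely fussy point is the one you pass over quickly: the gluing. The locus where $Z_p$ and $Z_q$ have equal germs is indeed open and contains $U_p\cap U_q\cap H$, but one must also arrange that after shrinking, $\widetilde U_p\cap\widetilde U_q$ meets $Z_p\cup Z_q$ only near $H$ (using $Z_p\cap M=H\cap U_p$ and discarding superfluous components), and that the shrinking is done consistently over a locally finite cover; this bookkeeping is routine but is where most of the published proof lives. With those two standard facts granted your argument is correct; for the purposes of this paper, however, the theorem is simply imported, and its only role is to justify the standing coherence hypothesis that makes $H^{\imath}$ globally well defined.
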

\par Now we build an irreducible real analytic hypersurface in $\mathbb{P}^{3}$ which is not coherent. Let $[z_0:z_1:z_2:z_3]$ be the homogeneous coordinates in $\mathbb{P}^3$ and set $H\subset\mathbb{P}^3$ be the complex cone whose equation is
$$H=\{(z_3\bar{z}_0+\bar{z}_3 z_0)\left((z_1\bar{z}_0+\bar{z}_1 z_0)^2+(z_2 \bar{z}_0+\bar{z}_2 z_0)^2\right)-(z_1\bar{z}_0+\bar{z}_1 z_0)^3=0\}.$$
The germ $H_{p}$ of $H$ at $p=[1:0:0:0]$ is irreducible and of real dimension 5 at $p$. However, in a neighborhood of $[1:0:0:z]$, $z\neq 0$, $H$ reduces to the complex line $z_1=z_2=0$, which is of real dimension 2. By Proposition \ref{nara}, it follows that $H$ is not coherent. 
\subsection{Levi-flat subset in complex manifolds.} We give a brief resume of definitions and some known results about real analytic Levi-flat subsets in complex manifolds. 
Let $H$ be an irreducible real analytic Levi-flat subset of Levi dimension $n$ in an $N$-dimensional complex manifold $M$. The notion of
 Levi-flat subset germifies and, in general, we do not distinguish a germ at $(\C^{N},0)$ from its realization in some neighborhood $U$ of $0 \in \C^{N}$. 
If $p\in H_{reg}$ then, according to \cite[Proposition 3.1]{bretas}, there exists a holomorphic coordinate system $z=(z',z'')\in \C^{n+1} \times \C^{N-n-1}$ such that $z(p)=0\in\C^N$ and the germ of $H$ at $p$ is defined by
\begin{equation}
\label{levi-local-form}
H=\{z = (z',z'') \in \C^{n+1} \times \C^{N-n-1}:\  \im(z_{n+1})=0,\,\,\,\,z''=0\},
\end{equation}
where $z'=(z_1, ... ,z_{n+1})$ and $z''=(z_{n+2},...,z_N)$ and the Levi foliation is given by
$$\{z = (z',z'') \in \C^{n+1} \times \C^{N-n-1}:\  z_{n+1}=c,\,\,\,\, z''=0,  \ \text{with} \ c \in \mathbb{R}\}.$$ This trivial model is, in fact, a local form for a non-singular real analytic Levi-flat subset. Note that  in the local form \eqref{levi-local-form}, $\{z'' = 0\}$ corresponds to the unique local  $(n+1)-$dimensional complex subvariety of the ambient space containing the germ of $H_{reg}$ at $p$.  These local subvarieties glue together forming a complex variety defined in a whole neighborhood of $H_{reg}$.  It is analytically extendable  to a neighborhood of $\ov{H_{reg}}$ by the following theorem:

\begin{theorem}[Brunella \cite{brunella}]\label{Levi-viz-Hi}
Let $M$ be an $N-$dimensional complex manifold and $H \subset M$ be a  real analytic Levi-flat subset of Levi dimension $n$.  Then,   there exists  a neighborhood $ V \subset M$ of $\overline{H_{reg}}$   and  a unique complex    variety $X \subset V$ of dimension $n+1$ containing $H$.
\end{theorem}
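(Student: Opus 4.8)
The plan is to build $X$ in two stages: first over a neighborhood of the regular part $H_{reg}$, where the normal form \eqref{levi-local-form} makes the construction explicit, and then across the thin singular set $\sing(H)=\overline{H_{reg}}\setminus H_{reg}$ by a removable--singularity argument. For the first stage, at each $p\in H_{reg}$ I would pick coordinates $(z',z'')$ realizing \eqref{levi-local-form}; as noted there, $\{z''=0\}$ is the \emph{unique} germ at $p$ of an $(n+1)$-dimensional complex subvariety containing the germ of $H_{reg}$ -- concretely it is the complex submanifold whose tangent space along $H_{reg}$ is the complex span of $TH_{reg}$, i.e. the intrinsic complexification of the real $(2n+1)$-manifold $H_{reg}$. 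Uniqueness of this germ forces the pieces $\{z''=0\}$ obtained in overlapping charts to agree, so they glue to a closed complex submanifold $X_0$ of pure dimension $n+1$ on an open neighborhood $V_0$ of $H_{reg}$, with $H_{reg}\subset X_0$. This stage is routine.

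For the second stage, observe that $\overline{H_{reg}}$ is semianalytic of real dimension $2n+1$, hence its singular locus $\sing(H)$ is semianalytic with $\dim_\R\sing(H)\le 2n$; in particular $\mathcal{H}^{2n+1}(\sing(H))=0$ and $2n+1=2\dim_\C X_0-1$. I would then show that near each $q\in\sing(H)$ the closure of $X_0$ is a complex-analytic subvariety of dimension $n+1$, and glue these local extensions with $X_0$ to obtain $X$ on a neighborhood $V$ of $\overline{H_{reg}}$. The tool for the local extension is a removable--singularity theorem of Remmert--Stein/Bishop/Shiffman type: a pure $d$-dimensional analytic set in the complement of a closed set of vanishing $(2d-1)$-Hausdorff measure extends across that set once it is closed in a full neighborhood and has locally finite volume there. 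The delicate point -- and the main obstacle -- is that no normal form is available at $q$, so a priori $X_0$ is only controlled on a neighborhood of $H_{reg}$ that may shrink toward $\sing(H)$, and its volume near $q$ must be estimated. This is where coherence of $H$ enters: near $q$ one complexifies the finitely many real analytic generators of $\mathcal{I}(H)$ to obtain a local complexification $\widehat H$ of $H$, the submanifold $X_0$ is contained in the image of $\widehat H$ under the local projection to $M$ (the computation at regular points, where $\widehat H=\{z_{n+1}=w_{n+1},\,z''=0,\,w''=0\}$, gives image $\{z''=0\}=X_0$), and analyzing this image near $q$ should supply the full neighborhood, the volume bound, and the identification of $\overline{X_0}$ as an analytic set of dimension $n+1$.

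Uniqueness is then formal: if $X'$ is another complex variety of dimension $n+1$ defined near $\overline{H_{reg}}$ with $H\subset X'$, then $X'\supset H_{reg}$, and at each $p\in H_{reg}$ the $(n+1)$-dimensional germ of $X'$ contains the germ of $H_{reg}$, hence equals $\{z''=0\}$ by the first stage; so $X'$ and $X_0$ coincide near $H_{reg}$. Since $X$ is obtained from $X_0$ by analytic continuation across a set of vanishing $\mathcal{H}^{2n+1}$-measure, such a continuation is unique, so $X'=X$ on a neighborhood of $\overline{H_{reg}}$. In summary, the local construction and the uniqueness are elementary; the heart of the matter is controlling $X_0$ near $\sing(H)$ -- where the geometry is not trivialized -- and showing its closure there is genuinely analytic of the expected dimension, which is exactly the step that forces one to bring in the coherent complexification of $H$ together with a removable--singularity theorem.
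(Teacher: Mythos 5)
First, note that the paper does not actually prove this statement: it is quoted from Brunella \cite{brunella} (its adaptation from hypersurfaces to Levi-flat subsets is carried out in \cite{bretas}), so there is no internal proof to compare against. Judged on its own terms, your first stage and the uniqueness argument are correct and routine, exactly as the paragraph preceding the theorem indicates: at each $p\in H_{reg}$ the germ $\{z''=0\}$ is the unique $(n+1)$-dimensional complex germ containing the germ of $H_{reg}$, uniqueness glues these into a variety $X_0$ on a neighborhood of $H_{reg}$, and the same uniqueness forces any competitor $X'$ to coincide with $X_0$ there.

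The entire substance of the theorem is your second stage, and there you have correctly located both the difficulty and the right object --- the projection to $M$ of a local complexification $\widehat H$ of $H$ near $q\in\sing(H)$, which at regular points is $\{z''=0\}$ --- but the step is asserted rather than proved. You still owe: (i) properness of that projection on a suitable neighborhood of the point $(q,\bar q)$ so that Remmert's theorem makes the image analytic; (ii) the fiber-dimension count (generic fibers of dimension $n$ inside the $(2n+1)$-dimensional variety $\widehat H$) showing the image has dimension exactly $n+1$ and not more; and (iii) the identification of $\overline{X_0}$ near $q$ with a component of that image. This is precisely the content of Brunella's argument, and once one has it the Shiffman/Bishop removable-singularity machinery becomes superfluous: the analytic extension across $\sing(H)$ is obtained directly as the projected complexification, with no need for a volume bound. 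Two smaller corrections. Coherence is not a hypothesis of this theorem and is not what makes the local complexification at $q$ available --- every germ of a real analytic subvariety is cut out by finitely many real analytic functions and admits a canonical local complexification (\cite[p.~40]{guaraldo}); coherence only enters later, in Proposition \ref{coherente}, to upgrade the neighborhood of $\overline{H_{reg}}$ to a neighborhood of all of $H$. And the bound $\dim_\R\sing(H)\le 2n$, while true, is a fact about semianalytic sets that deserves justification if you intend to lean on it.
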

The variety $X$ is the realization in the neighborhood $V$   of a germ of complex analytic
variety around $H$.
We  denote it --- or its germ --- by $H^{\imath}$ and call it  \emph{intrinsic complexification} or  \emph{$\imath$-complexification} of $H.$ It plays a central role in the theory of real analytic Levi-flat subsets. The notion of intrinsic complexification also appears in \cite{sukhov} with the name of the \emph{Segre envelope}. If $H$ is invariant by a holomorphic foliation on $M$, the same holds for its $\imath$-complexification, see for instance \cite[Proposition 3.3]{bretas}.
\begin{proposition}\label{fol-inv}
Let $H \subset M$ be a  real analytic Levi-flat subset of Levi dimension $n$, where
 $M$ is a complex manifold of dimension $N$. If $H$ is
  invariant by an $n$-dimensional  holomorphic foliation $\mathcal{F}$ on $M$, then its $\imath$-complexification $H^\imath$ is also invariant by $\mathcal{F}$.
\end{proposition}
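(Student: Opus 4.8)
The plan is to prove that $\mathcal{F}$ is tangent to $X:=H^{\imath}$ by first checking tangency along $H_{reg}$, where $\mathcal{F}$ coincides with the Levi foliation, and then propagating it to all of $X$ by an analyticity argument.

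First I would recall, from Theorem \ref{Levi-viz-Hi} and the local normal form \eqref{levi-local-form}, that $X$ is the unique complex variety of dimension $n+1$ defined in a neighborhood of $\overline{H_{reg}}$ and containing $H$, and that around a point $p\in H_{reg}$ one has $X=\{z''=0\}$ while the Levi leaves near $p$ are the $n$-dimensional complex submanifolds $\{z_{n+1}=c,\ z''=0\}$, $c\in\R$, all contained in $X$; in particular $H_{reg}\subseteq X_{reg}$. Since $H$ is $\mathcal{F}$-invariant, the restriction of $\mathcal{F}$ to $H_{reg}$ is the Levi foliation $\LL$; in particular $\mathcal{F}$ is nonsingular along $H_{reg}$, and for every $q\in H_{reg}$ the leaf of $\mathcal{F}$ through $q$ is a Levi leaf, so $T_q\mathcal{F}=T_q\LL\subset T_qX$. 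Thus $\mathcal{F}$ is tangent to $X$ at each point of $H_{reg}$.

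Next I would set $X^{*}:=X_{reg}\setminus\sing(\mathcal{F})$, a connected complex manifold of dimension $n+1$ (connected because $X$ is irreducible and the removed sets have positive complex codimension), and observe that $H_{reg}\subset X^{*}$. Over $X^{*}$ one can write $\mathcal{F}$ locally by $n$ holomorphic vector fields spanning $T\mathcal{F}$ and $X$ locally by holomorphic equations $g_{1}=\cdots=g_{r}=0$ with $dg_{1},\dots,dg_{r}$ of constant rank $N-n-1$; then the tangency locus
\[
A:=\{\,q\in X^{*}:\ dg_{j}|_{q}\ \text{vanishes on}\ T_q\mathcal{F}\ \text{for}\ j=1,\dots,r\,\}
\]
is a complex analytic subset of $X^{*}$. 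By the previous paragraph $A\supseteq H_{reg}$, a set of real dimension $2n+1=2\dim_\C X-1$. Since a proper complex analytic subset of the connected manifold $X^{*}$ has real dimension at most $2\dim_\C X-2$, the set $A$ cannot be proper; hence $A=X^{*}$, i.e. $\mathcal{F}$ is tangent to $X$ at every point of $X^{*}$. As $X^{*}$ is dense in $X$ and tangency is a closed condition, $X=H^{\imath}$ is invariant by $\mathcal{F}$.

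I expect the propagation step to be the main obstacle: one must check that the tangency locus is genuinely cut out by holomorphic equations, so that $A$ is analytic, and then apply the dimension count --- whose essential content is that the real hypersurface $H_{reg}$ of the complex variety $X$ cannot lie inside any proper complex analytic subvariety of $X$. The bookkeeping around $\sing(\mathcal{F})$ and the singular locus of $X$ --- checking that $X^{*}$ remains connected and still meets $H_{reg}$ in a set of full real dimension $2n+1$ --- also needs attention, but it presents no essential difficulty because every set removed has real codimension at least two.
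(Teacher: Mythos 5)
The paper does not actually give a proof of Proposition \ref{fol-inv}: it is quoted from \cite[Proposition 3.3]{bretas}. Your argument is correct and is essentially the standard proof of that result --- tangency of $\mathcal{F}$ to $H^{\imath}$ along $H_{reg}$ via the local normal form \eqref{levi-local-form}, analyticity of the tangency locus, and the observation that a complex analytic subset of the $(n+1)$-dimensional variety $H^{\imath}$ containing the $(2n+1)$-real-dimensional set $H_{reg}$ cannot be proper. The only bookkeeping you gloss over is that $H_{reg}$ may meet $\sing(\mathcal{F})$, so the inclusion $H_{reg}\subset X^{*}$ should read $H_{reg}\setminus\sing(\mathcal{F})\subset X^{*}$; this set still has full real dimension $2n+1$, so the dimension count is unaffected.
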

As a consequence, if we denote by $\F^{\imath}:=\F|_{H^{\imath}}$ (the restriction of $\F$ to $H^{\imath}$), we have $\F^{\imath}$ has codimension one in $H^{\imath}$. The following proposition shows the importance of the assumption of the \textit{coherence} of a Levi-flat subset. 
\begin{proposition}\cite[Proposition 3.6]{bretas}\label{coherente}
Let $M$ be an $N$-dimensional complex manifold and $H\subset M$ be an irreducible real analytic Levi-flat subset of Levi dimension $n$. Suppose that $H$ is coherent. Then, there exist an open neighborhood $V\subset M$ of $H$ and a unique irreducible complex subvariety $X$ of $V$ of complex dimension $n+1$ containing $H$.
\end{proposition}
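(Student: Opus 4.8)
The plan is to bootstrap from Brunella's Theorem \ref{Levi-viz-Hi}, which already furnishes, on a neighborhood $V_0$ of $\overline{H_{reg}}$, a \emph{unique} complex variety $X$ of dimension $n+1$ containing $H$. Brunella's statement does not assert irreducibility of $X$, and its neighborhood only surrounds $\overline{H_{reg}}$, whereas in general $\overline{H_{reg}}\subsetneq H$. Thus the two genuinely new points to establish are: (i) that the neighborhood can be taken around all of $H$, and (ii) that $X$ is irreducible. Both will be extracted from the coherence hypothesis together with the irreducibility of $H$ as a real analytic subvariety; in particular, nothing here leaves the analytic category or uses compactness of $H$.

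First I would show that coherence forces $H=\overline{H_{reg}}$. Since $H$ is coherent and its germ is irreducible, Proposition \ref{nara} guarantees that $\dim_{\R}H_q$ is locally constant; as the generic value along $H_{reg}$ is $2n+1$ (the real dimension fixed by the Levi dimension $n$), we obtain $\dim_{\R}H_p=2n+1$ at every $p\in H$. A coherent real analytic set whose local dimension is constant is pure-dimensional, so its top-dimensional regular points are dense; this is exactly the phenomenon that fails in the Whitney-umbrella-type non-coherent examples recalled in the introduction, where a lower-dimensional stratum sticks out. Hence $H=\overline{H_{reg}}$, and the neighborhood $V_0$ of $\overline{H_{reg}}$ provided by Theorem \ref{Levi-viz-Hi} is already a neighborhood of all of $H$; we set $V=V_0$.

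Next, for irreducibility, decompose $X=\bigcup_i X_i$ into irreducible components in $V$. Because $H$ is irreducible and $H=\overline{H_{reg}}$, its regular part $H_{reg}$ is connected: a disconnection of $H_{reg}$ would yield, upon taking closures, a nontrivial decomposition of $H=\overline{H_{reg}}$ into proper real analytic subsets. By the local normal form \eqref{levi-local-form}, near each point of $H_{reg}$ the variety $X$ agrees with the unique smooth germ $\{z''=0\}$ of dimension $n+1$ containing $H_{reg}$; so along the connected set $H_{reg}$ the variety $X$ lies in a single component, say $X_1$, whence $H=\overline{H_{reg}}\subseteq X_1$. But $X_1$ is itself an $(n+1)$-dimensional complex subvariety of $V$ containing $H$, so the uniqueness clause of Theorem \ref{Levi-viz-Hi} forces $X_1=X$. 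Therefore $X$ consists of a single irreducible component and is irreducible, and its uniqueness is inherited verbatim from Brunella's theorem.

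The main obstacle is the first step: promoting the neighborhood from $\overline{H_{reg}}$ to $H$, that is, ruling out points of $H$ where the real dimension drops and around which the intrinsic complexification could fail to be defined or to have dimension $n+1$. This is precisely what coherence excludes through the constancy of local dimension in Proposition \ref{nara}. Once $H=\overline{H_{reg}}$ is secured, the irreducibility and uniqueness follow formally from the local uniqueness encoded in \eqref{levi-local-form} and the global uniqueness in Theorem \ref{Levi-viz-Hi}, with no further analytic work required.
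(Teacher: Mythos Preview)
The paper does not prove this proposition at all: it is quoted verbatim from \cite[Proposition 3.6]{bretas} and used as a black box. So there is no ``paper's own proof'' to compare against; what follows is an assessment of your argument on its own merits.

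Your overall architecture is the right one --- reduce to Brunella's Theorem~\ref{Levi-viz-Hi} by showing $H=\overline{H_{reg}}$, then argue irreducibility of $X$ from connectedness of $H_{reg}$. But the first step, as written, misapplies Proposition~\ref{nara}. That proposition has as hypothesis that the \emph{germ} $H_p$ is irreducible, whereas you only know that $H$ is \emph{globally} irreducible; these are different conditions (a nodal curve is globally irreducible but its germ at the node is not). So you cannot conclude directly that $\dim_{\R}H_q$ is locally constant at every point of $H$. The conclusion $H=\overline{H_{reg}}$ is nonetheless correct for coherent irreducible sets, but it needs a different justification --- for instance, pass to the global complexification $H^{\mathbb{C}}$ that coherence affords, use that complex analytic sets have honest irreducible decompositions with dense regular parts, and pull the statement back to $H$ via the anti-holomorphic involution. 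Without some argument of this kind, the key sentence ``Proposition~\ref{nara} guarantees that $\dim_{\R}H_q$ is locally constant'' is unjustified.

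There is a smaller gap in the irreducibility step: you assert that a disconnection of $H_{reg}$ would yield a decomposition of $H$ into proper \emph{real analytic} subsets, but closures of open pieces of $H_{reg}$ are a priori only semianalytic (as the introduction itself notes). For coherent sets this does work --- the closures of the connected components of the top stratum are exactly the global irreducible components --- but you should invoke coherence explicitly here rather than treat it as automatic. Once these two points are patched, the remainder of your argument (the local model \eqref{levi-local-form} forcing $H_{reg}$ into a single component of $X$, and uniqueness from Theorem~\ref{Levi-viz-Hi}) is fine.
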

\par  The variety $X$  is the small variety of complex dimension $n+1$ that contains $H$. Again, let us denote this variety by $H^{\imath}$,  the intrinsic complexification of $H$.

\subsection{Levi-flat subsets in complex projective spaces}
In this subsection, we state some results of real analytic Levi-flat subset in $\Pn^N$. 
Let $\sigma:\C^{N+1}\to\Pn^N$ be the natural projection. Suppose that $H$ is a real-analytic subvariety of $\Pn^N$. Define the set $\tau(H)$ to be the set of points $z\in\C^{N+1}$ such that $\sigma(z)\in H$ or $z=0$. A real analytic subvariety $H\subset\Pn^{N}$ is said to be \textit{algebraic} if $H=\sigma(V)$ for some real algebraic complex cone $V$ in $\C^{N+1}$. A set $V$ is a complex cone when $p\in V$ implies $\lambda p \in V$ for all $\lambda\in\C$. 
\par The following construction offers several examples of Levi-flat subsets in $\Pn^N$.
\begin{proposition}\cite[Proposition 6.1]{bretas}
Let $X\subset\Pn^N$ be an irreducible $(n+1)$-dimensional algebraic variety, $R$ be a rational function in $X$ and $C\subset \Pn^1$ be a real algebraic one-dimensional subvariety. Then the set $\overline{R^{-1}(C)}$ is a real algebraic Levi-flat subset of Levi dimension $n$ whose $\imath$-complexification is $X$
\end{proposition}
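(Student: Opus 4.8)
The plan is to exhibit $H:=\overline{R^{-1}(C)}$ by real polynomial equations, to identify its Levi foliation with the family of fibers of $R$, and finally to pin down its $\imath$-complexification by a dimension count inside $X$. Throughout we assume $R$ non-constant (hence dominant onto $\Pn^{1}$), the remaining case being degenerate.

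\emph{Step 1: $H$ is real algebraic.} Let $\widehat X\subset\C^{N+1}$ be the affine cone over $X$ and lift $R$ to a rational map $\widehat R=(P_0,P_1):\widehat X\dashrightarrow\C^{2}$, where $P_0,P_1$ are homogeneous polynomials of a common degree, not both identically zero on $\widehat X$; the indeterminacy locus $I:=\{P_0=P_1=0\}\cap\widehat X$ is a complex cone, proper in $\widehat X$, so $\dim_\C I\le n+1$. Since $C\subset\Pn^{1}$ is a real algebraic one-dimensional subvariety, its cone $\widehat C\subset\C^{2}$ is a real algebraic complex cone, so $\widehat C=\{w\in\C^{2}:\phi_1(w,\bar w)=\cdots=\phi_\ell(w,\bar w)=0\}$ for real polynomials $\phi_j$ that are bihomogeneous (of equal degree in $w$ and in $\bar w$); in particular each $\phi_j$ vanishes at the origin. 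Setting $\Phi_j(z,\bar z):=\phi_j(P_0(z),P_1(z),\overline{P_0(z)},\overline{P_1(z)})$, the real algebraic complex cone
\[
W:=\{z\in\widehat X:\ \Phi_1(z,\bar z)=\cdots=\Phi_\ell(z,\bar z)=0\}
\]
satisfies $W=\tau(R^{-1}(C))\cup I$: off $I$ the system $\Phi_j=0$ says exactly $[P_0(z):P_1(z)]\in C$, while $I\subset W$ because the $\phi_j$ vanish at $0$. Since $\sigma$ is continuous and open, $\tau(H)=\overline{\tau(R^{-1}(C))}$, and because $\dim_\R I\le 2n+2<2n+3=\dim_\R\tau(R^{-1}(C))$ this closure is obtained from $W$ by discarding the lower-dimensional material supported on $I$, so $\tau(H)$ is again a real algebraic complex cone (a union of irreducible components of $W$). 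Hence $H=\sigma(\tau(H))$ is a real algebraic subvariety of $\Pn^{N}$ of real dimension $2n+1$; replacing $H$ by an irreducible component if necessary, we take $H$ irreducible.

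\emph{Step 2: the Levi foliation of $H$ is the family of fibers of $R$.} Let $\Sigma\subset X$ be the complex analytic subset obtained as the union of the singular locus of $X$, the indeterminacy locus of $R$ in $X$, and the critical locus of the holomorphic map $R$ on the smooth part of $X$ away from its indeterminacy; then $\dim_\C\Sigma\le n$. Let $C^{\mathrm{sm}}\subset C$ be the cofinite open set of points near which $C$ is a one-dimensional real analytic submanifold of $\Pn^{1}$, and put $U:=R^{-1}(C^{\mathrm{sm}})\setminus\Sigma\subset H$. On $U$ the map $R$ is a holomorphic submersion onto a real curve, so $U$ is a real analytic submanifold of $X$ of real dimension $2n+1$, foliated by the fibers $R^{-1}(c)$, $c\in C^{\mathrm{sm}}$, each of which is a complex submanifold of complex dimension $\dim_\C X-1=n$. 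Since $H\setminus U$ has real dimension at most $2n$, we have $U\subset H_{reg}$ and $\overline U=\overline{H_{reg}}$; at each $p\in U$ the unique complex hyperplane $\LL_p\subset T_pH_{reg}$ is the tangent space of the fiber of $R$ through $p$, so these fibers are integral manifolds of the distribution $p\mapsto\LL_p$. This distribution is real analytic on $H_{reg}$ and integrable on the dense open set $U$, hence integrable on $H_{reg}$; therefore $H$ is Levi-flat of Levi dimension $n$, its Levi foliation being the one whose leaves are the closures of the fibers of $R$.

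\emph{Step 3: $H^{\imath}=X$, and the main obstacle.} By construction $H\subset X$, and $X$ is irreducible of complex dimension $n+1$; any complex subvariety $X'\subsetneq X$ has $\dim_\C X'\le n$, hence $\dim_\R X'\le 2n<2n+1=\dim_\R H$, so $H$ lies in no complex subvariety of $\Pn^{N}$ of dimension at most $n$, and $X$ is the Zariski closure of $H$ in $\Pn^{N}$. By Theorem \ref{Levi-viz-Hi} (together with Proposition \ref{coherente}) there is, near $\overline{H_{reg}}$, a unique complex variety of dimension $n+1$ containing $H$, namely $H^{\imath}$; since the germ of $X$ along $\overline{H_{reg}}$ is such a variety, uniqueness forces $H^{\imath}$ and $X$ to agree as germs along $\overline{H_{reg}}$, and, both being irreducible, $H^{\imath}=X$. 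The one genuinely delicate step is the bookkeeping in Step 1: one must verify that the standard-topology closure of $R^{-1}(C)$ is an honest real algebraic set and acquires no components of the wrong dimension from the indeterminacy locus of $R$ or from the finitely many bad points of $C$. This rests on the estimate $\dim_\C\Sigma\le n$ (so these bad loci are negligible of real dimension $\le 2n$) and on the fact that a real algebraic set is the closure of the top-dimensional part of its regular locus; once this is granted, Steps 2 and 3 are direct unwindings of the definitions of ``Levi-flat subset'' and of ``$\imath$-complexification''.
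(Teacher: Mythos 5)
Your overall strategy --- pulling back a bihomogeneous defining equation of $C$ through homogeneous representatives $(P_0,P_1)$ of $R$, identifying the Levi leaves with the fibres of $R$ over the smooth points of $C$, and deducing $H^{\imath}=X$ from the uniqueness in Theorem \ref{Levi-viz-Hi} together with a dimension count --- is the natural one (the paper itself states this proposition without proof, citing \cite{bretas}), and your Steps 2 and 3 are essentially correct once Step 1 is secured.

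The genuine gap is in Step 1, at exactly the point you flag but do not repair: you must show that the Euclidean closure $\overline{R^{-1}(C)}$ is an honest real algebraic (at the very least real analytic) subvariety, since otherwise $H_{reg}$, Levi-flatness and the $\imath$-complexification are not even defined for it. You argue that $\overline{W\setminus I}$ is a union of irreducible components of $W$ because $\dim_{\R}I<\dim_{\R}\tau(R^{-1}(C))$, and you ultimately rest this on ``the fact that a real algebraic set is the closure of the top-dimensional part of its regular locus.'' That fact is false over $\R$: the Whitney umbrella, which the paper invokes in its introduction precisely to warn that $\overline{H_{reg}}\subsetneq H$ can happen, is an irreducible real algebraic surface that is not the closure of its two-dimensional part, and that closure is only semianalytic. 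Your dimension count shows that $I$ is negligible in dimension, but it says nothing about whether the Euclidean closure of $W\setminus I$ reaches all of $W\cap I$ on the relevant components; a priori $\overline{R^{-1}(C)}$ could be a proper, merely semialgebraic subset of $\sigma(W)$. What closes the gap is a complex-analytic fact rather than a dimensional one: after normalizing the choice of $P_0,P_1$ so that, away from spurious components, $\sigma(I)$ is the indeterminacy locus of $R$ on $X$, resolve the indeterminacy by $\pi\colon\widetilde X\to X$; for an indeterminacy point $p$ the set $\widetilde R(\pi^{-1}(p))$ is a connected subvariety of $\Pn^1$ which cannot reduce to a single point, hence equals $\Pn^1$, so $p\in\overline{R^{-1}(c)}$ for every $c\in\Pn^1$ and in particular $\sigma(I)\subset\overline{R^{-1}(C)}$. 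With that, $\overline{R^{-1}(C)}$ is $\sigma$ of a union of irreducible components of $W$, hence algebraic, and the rest of your argument goes through as written.
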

\par When we add the hypothesis that the Levi-flat subset is invariant by a singular holomorphic foliation in the ambient space, we can state
a reciprocal result. 
\begin{proposition}\cite[Proposition 6.3]{bretas}\label{lebl_prop}
Let $\F$ be a singular holomorphic foliation in $\Pn^N$ tangent to a real analytic Levi-flat subset $H$ of Levi dimension $n$. Suppose that $H$ is coherent and its $\imath$-complexification extends to an algebraic subvariety $H^{\imath}$ in $\Pn^N$. If $\F^{\imath}$ has a rational first integral $R$, then there exists a real algebraic one-dimensional subvariety $C\subset\Pn^1$ such that $\overline{H_{reg}}\subset\overline{R^{-1}(C)}$.
\end{proposition}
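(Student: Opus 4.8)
The plan is to use the local structure of a real analytic Levi-flat subset adapted to the first integral, together with the algebraicity of the intrinsic complexification, in order to reduce the assertion to a Tarski--Seidenberg argument in $\Pn^{1}$. First I would set up the local picture. Since $\F$ is tangent to $H$, the Levi foliation $\LL$ of $H$ coincides on $H_{reg}$ with the restriction of $\F$, hence with the restriction of $\F^{\imath}=\F|_{H^{\imath}}$; in particular $R$ is constant along the leaves of $\LL$. Choose $p\in H_{reg}$ outside $\sing(H^{\imath})$ and outside the indeterminacy locus of $R$ (both proper closed subsets, so a generic $p$ works). Near $p$ the variety $H^{\imath}$ is a smooth complex manifold of dimension $n+1$, and by the local normal form \eqref{levi-local-form} there are holomorphic coordinates $z'=(z_1,\dots,z_{n+1})$ on $H^{\imath}$ in which $H=\{\im z_{n+1}=0\}$ and the Levi leaves are $\{z_{n+1}=c,\ c\in\R\}$. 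Being holomorphic and constant on these leaves, $R$ depends near $p$ only on $z_{n+1}$, say $R=\phi(z_{n+1})$ with $\phi$ a nonconstant holomorphic map of a disc into $\Pn^{1}$; hence $R(H\cap U)=\phi(\R)$ is a real analytic arc and $R^{-1}(\phi(\R))\cap U=H\cap U$.

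Letting $p$ range over the dense open generic part of $H_{reg}$, this shows that the fibres of $R$ restricted to $H_{reg}$ contain Levi leaves, so have real dimension at least $2n$; since $\dim_{\R}H_{reg}=2n+1$, the set $C_{0}:=\overline{R(H_{reg})}$ is a closed subset of $\Pn^{1}$ of real dimension at most $1$. It is therefore enough to produce a real algebraic one-dimensional subvariety $C\subset\Pn^{1}$ with $C_{0}\subset C$: then $H_{reg}\subset R^{-1}(R(H_{reg}))\subset R^{-1}(C_{0})\subset R^{-1}(C)$ away from the indeterminacy of $R$, which does not affect closures, and consequently $\overline{H_{reg}}\subset\overline{R^{-1}(C)}$.

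The heart of the proof is to find this algebraic $C$, and for this I would first upgrade $H$ to a real algebraic subvariety. As $H$ is coherent it admits a global complexification $\widehat H$, a closed complex analytic subvariety of $\Pn^{N}\times\overline{\Pn^{N}}$ whose real trace is $H$; since $H\subset H^{\imath}$, comparing ideals gives $\widehat H\subset H^{\imath}\times\overline{H^{\imath}}$, and $\dim_{\C}\widehat H=\dim_{\R}H=2n+1=\dim_{\C}\big(H^{\imath}\times\overline{H^{\imath}}\big)-1$, so $\widehat H$ is a complex analytic hypersurface of the \emph{projective} variety $H^{\imath}\times\overline{H^{\imath}}$. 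By Chow's theorem $\widehat H$ is algebraic, hence its real trace $H$ is a real algebraic subvariety of $\Pn^{N}$, i.e. $\tau(H)$ is a real algebraic cone. Therefore $\overline{H_{reg}}$ is semialgebraic and $R$ restricts to it as a semialgebraic map, so by Tarski--Seidenberg $R(H_{reg})$, and with it $C_{0}$, is semialgebraic; being of dimension $\le1$ it lies in its real Zariski closure $C$, a real algebraic subvariety of $\Pn^{1}$ of dimension $\le1$ (which we may take pure one-dimensional by adjoining, if necessary, a curve through each isolated point). This $C$ then satisfies $\overline{H_{reg}}\subset\overline{R^{-1}(C)}$, as required.

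The step I expect to be the main obstacle is precisely this transfer of algebraicity: a priori a coherent real analytic subset of an algebraic variety need not be algebraic, and one must invoke the global complexification guaranteed by coherence together with Chow's theorem to force it. One must also take care to argue throughout with the generic part of $H_{reg}$ and with closures, so as to absorb $\sing(H^{\imath})$ and the indeterminacy locus of $R$; the remaining ingredients — Brunella's theorem providing $H^{\imath}$, the local normal form of Levi-flat subsets, and the dimension count — are routine.
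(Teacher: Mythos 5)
First, note that the paper does not prove this statement itself: it is quoted verbatim from \cite[Proposition 6.3]{bretas}, so your attempt can only be measured against the argument that result actually requires, not against a proof printed here.

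Your local analysis (the normal form \eqref{levi-local-form}, $R=\phi(z_{n+1})$ near a generic point of $H_{reg}$, the fibre-dimension count giving $\dim_{\R}R(H_{reg})\le 1$, and the closure bookkeeping at the end) is fine. The genuine gap is the step you yourself flag as the heart of the proof: the claim that coherence makes $H$ real algebraic. The global complexification furnished by coherence (\cite[p.~54]{guaraldo}) is a complex analytic variety defined only in a \emph{neighborhood} of $H$ inside $\Pn^{N}\times\overline{\Pn^{N}}$ (a neighborhood of the ``diagonal'' copy of $H$); it is not a closed analytic subvariety of the compact product $H^{\imath}\times\overline{H^{\imath}}$, so Chow's theorem simply does not apply to it. Indeed, the implication ``compact coherent real analytic subvariety of projective space $\Rightarrow$ real algebraic'' is false: a generic real analytic simple closed curve in $\R^{2}\subset\C^{2}\subset\Pn^{2}$ is a compact, coherent real analytic subvariety contained in no real algebraic curve. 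Consequently your Tarski--Seidenberg step has no semialgebraic input, and the whole second half of the argument collapses.

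What your proposal never uses at this crucial point are precisely the hypotheses that force algebraicity: the tangency to $\F$ and, above all, the existence of the rational first integral $R$ on the algebraic variety $H^{\imath}$. These imply that the closure of every Levi leaf is a union of irreducible components of a fibre $\overline{R^{-1}(c)}$, hence is an $n$-dimensional \emph{algebraic} subvariety sitting inside $\overline{H_{reg}}$; it is along such algebraic leaves (or their products in $H^{\imath}\times\overline{H^{\imath}}$) that one can invoke an extension theorem of Chow type such as Theorem \ref{chow_theorem} to globalize the local complexification, and it is the algebraic family of fibres of $R$ that lets one identify $R(\overline{H_{reg}})$ with a closed one-dimensional set cut out algebraically in $\Pn^{1}$. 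Without an argument of this kind --- i.e.\ without exploiting $R$ before passing to $\Pn^{1}$ --- there is no reason for the arc $\phi(\R)$, or for $C_{0}$, to lie on a real algebraic curve (compare $\phi(t)=t+ie^{t}$), so the proof as written does not establish the proposition.
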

Now, since $H$ is coherent, the intrinsic complexification $H^{\imath}$ is well-defined as a complex subvariety in a neighborhood of $H$. Our aim is to extend $H^{\imath}$ to an algebraic subvariety in $\Pn^N$. To get this, we use the following extension theorem.
\begin{theorem}[Chow \cite{chow}]\label{chow_theorem}
Let $Z\subset\Pn^N$ be a complex algebraic subvariety of dimension $k$ and $V$ be a connected neighborhood of $Z$ in $\Pn^{N}$. Then any complex analytic subvariety of dimension higher than $N-k$ in $V$ that intersects $Z$ extends algebraically to $\Pn^N$.
\end{theorem}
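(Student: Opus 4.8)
The plan is to prove that the closure $\widehat{W}:=\overline{W}$ of $W$ in $\Pn^N$ is a complex analytic subvariety of $\Pn^N$; once this is shown, the classical theorem of Chow (any analytic subvariety of $\Pn^N$ is algebraic) produces the required algebraic variety, and it manifestly extends $W$. Decomposing $W$ into irreducible components and discarding those of dimension $\le N-k$, I may assume that $W$ is irreducible and of pure dimension $d>N-k$ with $W\cap Z\neq\emptyset$. Since $W$ is closed in $V$, we have $\widehat{W}\cap V=W$; thus the entire difficulty is to understand $\widehat{W}$ along $\Pn^N\setminus V$, i.e.\ to continue $W$ analytically across the (non-analytic) boundary of the neighborhood $V$. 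Note that the hypothesis $d>N-k$ is equivalent to $d+k>N$, which is exactly the inequality forcing a would-be algebraic extension of $W$ to meet $Z$ in a set of positive dimension $d+k-N$; this is the quantitative fact that must be exploited.

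First I would reduce the ambient dimension by generic hyperplane sections, running an induction on $N-k$ (the codimension of $Z$). Cutting with a generic hyperplane $\Pn^{N-1}$ replaces $(W,Z,\Pn^N)$ by $(W\cap\Pn^{N-1},\, Z\cap\Pn^{N-1},\, \Pn^{N-1})$; by Bertini-type genericity the sections remain irreducible of the expected dimensions, the dimension inequality $\dim(W\cap\Pn^{N-1})>\dim(Z\cap\Pn^{N-1})$ relative to the new ambient space is preserved, and the sections still meet. In this way the problem is transported to instances where $W$ has small codimension, which are the cases amenable to the classical Remmert--Stein extension theorem, and where a local extension of $W$ can be glued consistently across sections.

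The crux --- and the step I expect to be the main obstacle --- is the analytic continuation of $W$ across $\Pn^N\setminus V$, i.e.\ past the non-analytic boundary of the tube $V$ around $Z$. Here the dimension surplus $d+k>N$ is indispensable: it guarantees that through a neighborhood of $W\cap Z$ one can position Hartogs-type configurations (for instance families of analytic discs, or linear slices meeting $Z$ and hence contained in $V$) whose boundaries lie in the region where $W$ is already defined, so that a Rothstein/Levi-type extension theorem for analytic sets of sufficiently high dimension applies. The algebraicity of $Z$ enters precisely to anchor these configurations: because $V$ contains a full neighborhood of the compact algebraic set $Z$, and because the relevant slices are forced to meet $Z$ by the inequality $\dim W+\dim Z>N$, the analytic data defining $W$ propagate along $Z$ and then spread to fill out $\widehat{W}$. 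Making the extension statement precise, and verifying that the locally continued pieces agree and assemble into a single pure-dimensional analytic set, is the genuine heart of the argument.

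Finally, once $\widehat{W}$ is known to be a closed analytic subvariety of $\Pn^N$, the classical Chow theorem shows it is algebraic; by construction it contains $W$, hence $\widehat{W}\cap V=W$, so it is an algebraic extension of $W$. Uniqueness is immediate from the identity principle for irreducible analytic sets: two algebraic extensions of the irreducible $W$ coincide on the nonempty open subset $W$ of each, hence are equal. The delicate points requiring care are the Bertini-genericity of the hyperplane sections, the precise hypotheses of the Remmert--Stein/Rothstein extension invoked at the boundary, and the verification that the extension produced is independent of the auxiliary choices; this last continuation step is where essentially all the difficulty resides.
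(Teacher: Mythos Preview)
The paper does not prove this theorem; it is quoted from Chow's 1969 paper and invoked as a black box in the proof of Proposition~\ref{extension}. There is thus no ``paper's own proof'' against which to compare your proposal.

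Regarding your sketch on its own terms: the global architecture---extend $W$ to a closed analytic subset of $\Pn^N$ and then apply the classical Chow theorem---is the natural one, but the two substantive steps are more fragile than you acknowledge. The hyperplane-section induction lowers the ambient dimension, yet you give no mechanism for lifting the extension of a generic slice $W\cap\Pn^{N-1}$ back to an extension of $W$ itself; knowing that each slice extends algebraically does not by itself produce an extension of $W$. More seriously, the continuation of $W$ across the non-analytic boundary $\partial V$ is not covered by Remmert--Stein (which requires the exceptional set to be a thin analytic set), and the Hartogs/Rothstein-type statement you would actually need---that an analytic set of dimension $>N-k$ defined in a neighborhood of a $k$-dimensional algebraic subvariety of $\Pn^N$ extends to all of $\Pn^N$---is essentially the theorem itself, so invoking it is circular. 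Chow's original argument proceeds quite differently, through the theory of meromorphic maps between algebraic varieties rather than through direct analytic continuation of subvarieties.
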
 
Under certain hypotheses, we can prove that the $\imath$-complexification $H^{\imath}$ can be extended to $\Pn^{N}$.
\begin{proposition}\label{extension}
Let $H\subset\Pn^N$, $N\geq 3$, be an irreducible coherent real analytic Levi-flat subset of Levi dimension $n$ such that $n>N/2$. If the Levi foliation $\LL$ has a quasi-invariant complex algebraic subvariety of complex dimension $n$, then $H^{\imath}$ extends algebraically to $\Pn^N$.
\end{proposition}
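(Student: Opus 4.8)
The plan is to extract from the quasi-invariant subvariety an honest projective subvariety of $\Pn^N$ of complex dimension $n-1$ lying inside $\overline{H_{reg}}$, and then to let Chow's theorem (Theorem \ref{chow_theorem}) perform the extension. First, since $H$ is coherent, Proposition \ref{coherente} provides an open neighborhood $V\subset\Pn^N$ of $H$ together with the irreducible complex subvariety $H^{\imath}\subset V$ of complex dimension $n+1$ containing $H$; replacing $V$ by the connected component containing $\overline{H_{reg}}$ --- which is connected, because $H$ is irreducible and so $H_{reg}$ is connected --- we may assume $V$ is a connected neighborhood of $\overline{H_{reg}}$. Let $S\subset\Pn^N$ be the complex algebraic subvariety of complex dimension $n$ that is quasi-invariant by the Levi foliation $\LL$, and note that $n\geq 2$ since $n>N/2\geq 3/2$.

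Next I would unwind what quasi-invariance of $S$ gives. Because $S$ is not $\LL$-invariant, the restricted foliation $\LL|_{S}$ is a genuine codimension-one foliation with complex leaves of dimension $n-1$ on $S$, and its regular leaves are connected complex submanifolds of dimension $n-1$ contained in the leaves of $\LL$, hence contained in $H_{reg}$. Since $\LL|_S$ is moreover algebraically integrable, each of its leaves is a projective complex subvariety of $S$. Fix a regular leaf $L_0$ of $\LL|_S$ and let $L\subset\Pn^N$ denote its Zariski closure: then $L$ is an irreducible projective complex subvariety of complex dimension $n-1$ containing $L_0$ as a dense subset, and since $L_0\subset H_{reg}$ while $\overline{H_{reg}}$ is closed we obtain $L\subset\overline{H_{reg}}\subset H\subset H^{\imath}$.

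Finally I would apply Theorem \ref{chow_theorem} with $Z:=L$, an algebraic subvariety of $\Pn^N$ of dimension $k=n-1$, and the connected neighborhood $V$ of $\overline{H_{reg}}$ --- hence of $L$ --- on which $H^{\imath}$ is defined. Indeed $H^{\imath}$ is a complex analytic subvariety of $V$ of complex dimension $n+1$, the inequality $n+1>N-(n-1)$ needed in Chow's theorem is exactly the hypothesis $2n>N$, i.e. $n>N/2$, and $H^{\imath}$ meets $Z$ because $\emptyset\neq L\subset H^{\imath}$. Hence $H^{\imath}$ extends to a complex algebraic subvariety of $\Pn^N$, as asserted.

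The only step I expect to require genuine care is the one in the middle paragraph: guaranteeing that a suitably chosen leaf of $\LL|_S$, once replaced by its Zariski closure $L$, is at the same time algebraic, of complex dimension exactly $n-1$, and contained in $\overline{H_{reg}}$. Algebraicity and the exact dimension come from $\LL|_S$ being an algebraically integrable codimension-one foliation --- this is where one uses that $S$ is not $\LL$-invariant, so that the generic leaf does not degenerate --- while the containment is a closure argument inside $S$. Once $L$ has been produced the conclusion is immediate, and the role of the hypothesis $n>N/2$ is precisely to make the dimension inequality $\dim H^{\imath}=n+1>N-\dim L$ hold, which is what powers Chow's extension theorem.
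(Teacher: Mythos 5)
Your proof is correct and follows essentially the same route as the paper: produce a projective algebraic subvariety of complex dimension $n-1$ lying inside $H^{\imath}$ and invoke Chow's theorem (Theorem \ref{chow_theorem}) via the inequality $n+1>N-(n-1)\iff n>N/2$. You are in fact more careful than the paper's own two-line proof, which simply writes ``denote by $L$ such quasi-invariant algebraic complex subvariety with $\dim_{\C}L=n-1$'' and asserts $L\subset H^{\imath}$ (despite the statement giving the quasi-invariant subvariety dimension $n$ and it being non-invariant, hence not contained in $H^{\imath}$); your middle paragraph --- passing to an algebraic leaf of $\LL|_{S}$, which has dimension $n-1$ and lies in $\overline{H_{reg}}\subset H^{\imath}$ --- supplies exactly the justification the paper leaves implicit.
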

\begin{proof}
Denote by $L$ such quasi-invariant algebraic complex subvariety with $\dim_{\C}L=n-1$. Since $L$ algebraic with $L\subset H^{\imath}$ and $\dim_{\C}H^{\imath}=n+1>N-(n-1)$, we can apply Theorem \ref{chow_theorem} to prove that $H^{\imath}$ extends algebraically to $\Pn^{N}$.
\end{proof}
\par To end this section, we shall prove the following proposition.
\begin{proposition}\label{singular_H}
Let $H\subset\Pn^N$ be an irreducible coherent real analytic Levi-flat subset of Levi dimension $n$ invariant by an $n$-dimensional singular holomorphic foliation $\F$ in $\Pn^{N}$. Suppose that the $\imath$-complexification $H^{\imath}$ extends to an algebraic variety in $\Pn^{N}$. If the Levi-foliation $\mathcal{L}$ has infinitely many quasi-invariant algebraic subvarieties of complex dimension $n-1$. Then, either the foliation $\F^{\iota}=\F|_{H^{\imath}}$ has a rational first integral in $H^{\imath}$, or   
$\F^{\iota}$ is a pull-back of a foliation on a projective surface under a dominant rational map.
\end{proposition}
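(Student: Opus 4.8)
The plan is to reduce the statement to the Pereira--Spicer Theorem \ref{teo_jorge} applied to the codimension one holomorphic foliation $\F^{\imath}=\F|_{H^{\imath}}$ on the projective variety $H^{\imath}$. By hypothesis $H^{\imath}$ extends to an irreducible algebraic subvariety of $\Pn^{N}$ of complex dimension $n+1$, and by Proposition \ref{fol-inv} the foliation $\F$ leaves $H^{\imath}$ invariant, so $\F^{\imath}$ is a genuine codimension one holomorphic foliation on the projective manifold (or variety) $H^{\imath}$. Moreover, on $H_{reg}$ the foliation $\F$ coincides with the Levi foliation $\LL$, so the restriction $\F^{\imath}$ restricted to $H$ agrees with $\LL$; in particular the complex leaves of $\LL$ are (open pieces of) leaves of $\F^{\imath}$. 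The first step, then, is to carefully set up $H^{\imath}$ as the ambient projective manifold --- passing to a resolution of singularities if needed so that Theorem \ref{teo_jorge} applies verbatim --- and to check that $\F^{\imath}$ is indeed codimension one there, which is exactly the content of the remark following Proposition \ref{fol-inv}.

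The second and crucial step is to convert the infinitely many quasi-invariant subvarieties of $\LL$ into infinitely many quasi-invariant hypersurfaces of $\F^{\imath}$ inside $H^{\imath}$. Let $\{S_j\}_{j}$ be the infinite family of $n$-dimensional complex algebraic subvarieties of $\Pn^{N}$ quasi-invariant by $\LL$; by definition each $S_j$ is not $\LL$-invariant but $\LL|_{S_j}$ is algebraically integrable of dimension $n-1$. I would first argue that each $S_j$ is contained in $H^{\imath}$: since $S_j$ has complex dimension $n$ and carries a codimension one subfoliation whose leaves are the complex leaves of $\LL$ lying in $S_j$, and these leaves accumulate on a piece of $H_{reg}$, the local uniqueness of the $(n+1)$-dimensional complex variety containing $H_{reg}$ (the definition of $H^{\imath}$, Theorem \ref{Levi-viz-Hi} and Proposition \ref{coherente}) forces $S_j\subset H^{\imath}$. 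Once $S_j\subset H^{\imath}$, the pair $(S_j,\F^{\imath})$ satisfies exactly the definition of a quasi-invariant hypersurface of the codimension one holomorphic foliation $\F^{\imath}$ on $H^{\imath}$: $S_j$ is a complex hypersurface of $H^{\imath}$ (dimension $n$ inside dimension $n+1$), it is not $\F^{\imath}$-invariant because it is not $\LL$-invariant and $\F^{\imath}$ agrees with $\LL$ along $H$, and $\F^{\imath}|_{S_j}=\LL|_{S_j}$ is algebraically integrable. Thus $\F^{\imath}$ admits infinitely many quasi-invariant hypersurfaces.

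The third step is then a direct invocation of Theorem \ref{teo_jorge} (Pereira--Spicer) for the foliation $\F^{\imath}$ on $H^{\imath}$: either $\F^{\imath}$ is algebraically integrable, in which case (since $H^{\imath}$ is projective) $\F^{\imath}$ has a rational first integral $R:H^{\imath}\dashrightarrow\Pn^{1}$; or $\F^{\imath}$ is the pull-back of a dimension one foliation on a projective surface under a dominant rational map. This is precisely the dichotomy claimed, so the proof concludes. The main obstacle I anticipate is the second step, specifically justifying rigorously that each quasi-invariant subvariety $S_j$ really sits inside $H^{\imath}$ and that ``not $\LL$-invariant'' for $S_j$ transfers to ``not $\F^{\imath}$-invariant'' — here one must use the coherence hypothesis (via Proposition \ref{coherente}, which gives a genuine irreducible complex subvariety $H^{\imath}$, not just a germ) together with the identity principle for the holomorphic foliation $\F$ extending $\LL$, so that invariance properties detected on the real locus $H_{reg}$ propagate to the complexification. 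A secondary technical point is handling the singularities of $H^{\imath}$ so that Theorem \ref{teo_jorge}, stated for projective manifolds, can be applied — this should be absorbed by passing to a desingularization $\widetilde{H^{\imath}}\to H^{\imath}$ and pulling back $\F^{\imath}$ and the family $\{S_j\}$, then pushing the resulting first integral or rational fibration back down.
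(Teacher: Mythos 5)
Your proposal is correct and takes essentially the same route as the paper: restrict $\F$ to the algebraic extension of $H^{\imath}$, pass to a Hironaka desingularization $\widetilde{H^{\imath}}$ so that Pereira--Spicer (Theorem \ref{teo_jorge}) applies to the lifted codimension one foliation with its infinitely many quasi-invariant hypersurfaces, and then push the resulting dichotomy back down through the birational map $\pi$. The one step you assert parenthetically --- that algebraic integrability of $\F^{\imath}$ on the projective variety $H^{\imath}$ yields a rational first integral --- is handled in the paper by invoking G\'omez-Mont's theorem on foliations with all leaves compact, composed with a non-constant rational map from the resulting one-dimensional quotient to $\Pn^1$.
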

\begin{proof}
First of all, we need to desingularize the $\imath$-complexification $H^{\imath}$. According to Hironaka desingularization theorem, there exist a complex manifold $\widetilde{H^{\imath}}$ and a proper bimeromorphic morphism $\pi:\widetilde{H^{\imath}}\to H^{\imath}$ such that 
\begin{enumerate}
\item $\pi:\widetilde{H^{\imath}}\setminus(\pi^{-1}(\sing(H^{\imath}))\to H^{\imath}\setminus \sing(H^{\imath})$ is a biholomorphism, 
\item $\pi^{-1}(\sing(H^{\imath}))$ is a simple normal crossing divisor.
\end{enumerate}
Since $H^{\imath}$ is compact then $\widetilde{H^{\imath}}$ is too. We lift $\F^{\imath}$ to an $n$-dimensional singular holomorphic foliation  $\widetilde{\F^{\imath}}$ on $\widetilde{H^{\imath}}$. Since $\dim_{\C}\widetilde{H^{\imath}}=n+1$, we have $\widetilde{\F^{\imath}}$ has codimension one on $\widetilde{H^{\imath}}$ and the tangency condition between $\F^{\imath}$ and $H$ implies that $\F^{\imath}$ has infinitely many quasi-invariant closed subvarieties (these are algebraic and of codimension one in $\widetilde{H^{\imath}}$). Thus the same holds for $\widetilde{\F^{\imath}}$. By Theorem \ref{teo_jorge}, either $\widetilde{\F^{\imath}}$ has a rational first integral or there exist a dominant rational map $\tilde{\rho}:\widetilde{H^{\imath}}\dashrightarrow Y$, where $Y$ is a projective complex surface, $\mathcal{G}$ is a foliation by curves on $Y$ and $\widetilde{\F^{\imath}}=\tilde{\rho}^{*}(\mathcal{G})$. If $\widetilde{\F^{\imath}}$ admits a rational first integral in $\widetilde{H^{\imath}}$, then all leaves of $\widetilde{\F^{\imath}}$ are compact and so their $\pi$-images are compact leaves of $\F^{\imath}$ in $H^{\imath}$. Applying G\'omez-Mont's theorem \cite{gomez}, we have that there exists a one-dimensional projective manifold $S$ and a rational map $f:H^{\imath}\dashrightarrow S$ whose fibers contain the leaves of $\F^{\imath}$. A rational first integral is obtained by composing $f$ with any non-constant rational map $r:S\dashrightarrow\mathbb{P}^{1}$. If $\widetilde{\F^{\imath}}$ is a pull-back of a foliation $\mathcal{G}$ on a projective complex surface $Y$ under a dominant rational map $\tilde{\rho}:\widetilde{H^{\imath}}\dashrightarrow Y$ then $\F^{\imath}$ is the pull-back of $\mathcal{G}$ under $\rho:=\tilde{\rho}\circ\pi^{-1}:H^{\imath}\dashrightarrow Y$, since $\pi$ is a birational map.
\end{proof}
\section{Proof of Theorem \ref{Theorem_1}}\label{teorema_sec}

\par With all the above results, we can prove Theorem \ref{Theorem_1}.
\begin{maintheorem}
Let $H\subset\mathbb{P}^N$, $N\geq 3$, be an irreducible real analytic Levi-flat subset of Levi dimension $n$ invariant by an $n$-dimensional singular holomorphic foliation $\F$ on $\mathbb{P}^N$. Suppose that $H$ is coherent and $n>N/2$. If the Levi foliation has infinitely many quasi-invariant subvarieties of complex dimension $n$, then there exists a unique projective subvariety $X$ of complex dimension $n+1$ containing $H$ such that either there exists a rational map $R:X\dashrightarrow\Pn^{1}$, and real algebraic curve $C\subset\Pn^{1}$ such that $\overline{H_{reg}}\subset\overline{R^{-1}(C)}$ or there exists a dominant rational map $\rho:X\dashrightarrow Y$ on a projective surface $Y$ and a semianalytic Levi-flat subset $T\subset Y$ such that $\overline{H_{reg}}\subset\overline{\rho^{-1}(T)}$.
\end{maintheorem}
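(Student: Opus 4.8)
The plan is to run the results of Sections~\ref{quasi} and~\ref{Levi-flat subset} in three stages: first produce and algebraize the $\imath$-complexification of $H$, then apply Proposition~\ref{singular_H} to get a dichotomy for $\F^{\imath}:=\F|_{H^{\imath}}$, and finally translate each branch of the dichotomy into the asserted statement about $\overline{H_{reg}}$.

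First I would use coherence. By Proposition~\ref{coherente} there is a neighborhood $V\subset\Pn^N$ of $H$ and a unique irreducible complex subvariety $H^{\imath}\subset V$ of complex dimension $n+1$ containing $H$, and by Proposition~\ref{fol-inv} it is $\F$-invariant, so $\F^{\imath}:=\F|_{H^{\imath}}$ is a codimension-one singular holomorphic foliation on $H^{\imath}$ that agrees with the Levi foliation $\LL$ along $H_{reg}$. Any quasi-invariant subvariety of $\LL$ of complex dimension $n$ is then swept out by leaves of $\LL$, hence contained in $H^{\imath}$, and is exactly a quasi-invariant hypersurface of $\F^{\imath}$. Picking one of the (infinitely many) such subvarieties and using $n>N/2$, Proposition~\ref{extension} (an application of Chow's theorem, Theorem~\ref{chow_theorem}) shows that $H^{\imath}$ extends to a projective subvariety of $\Pn^N$; uniqueness of the Chow extension together with uniqueness in Proposition~\ref{coherente} gives the required unique projective subvariety $X:=H^{\imath}$ of complex dimension $n+1$ containing $H$, and now $\F^{\imath}$ carries infinitely many quasi-invariant algebraic hypersurfaces. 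Proposition~\ref{singular_H} then applies and yields: either $\F^{\imath}$ has a rational first integral on $X$, or $\F^{\imath}=\rho^{*}\mathcal{G}$ for a one-dimensional foliation $\mathcal{G}$ on a projective surface $Y$ and a dominant rational map $\rho:X\dashrightarrow Y$.

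In the first case I would invoke Proposition~\ref{lebl_prop}: a non-constant rational first integral $R:X\dashrightarrow\Pn^{1}$ of $\F^{\imath}$, together with the algebraicity of $X$, produces a real algebraic curve $C\subset\Pn^{1}$ with $\overline{H_{reg}}\subset\overline{R^{-1}(C)}$, which is the first alternative. In the second case I would set $T:=\overline{\rho(H_{reg})}\subset Y$. Since $H$ is $\F$-invariant, $\overline{H_{reg}}$ is $\F$-, hence $\F^{\imath}$-, invariant, so it is saturated by leaves of $\F^{\imath}$; as every leaf of $\F^{\imath}=\rho^{*}\mathcal{G}$ is (a component of) a $\rho$-preimage of a leaf of $\mathcal{G}$, the image $\rho(H_{reg})$ is a one-real-parameter family of leaves of the curve-foliation $\mathcal{G}$, so $T$ is a real three-dimensional set foliated by complex curves, i.e.\ a Levi-flat hypersurface of $Y$; semianalyticity would be obtained by noting that near generic points $\rho(H_{reg})$ is the vanishing set in $Y$ of the descent of a local real-analytic defining function of $H_{reg}$, which patches to a semianalytic set (as in the arguments of \cite{bretas}). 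Finally, from $H_{reg}\setminus\ind(\rho)\subset\rho^{-1}(T)$, and since $\ind(\rho)$ has complex codimension $\geq 2$ in $X$ and therefore cannot contain an open subset of the real $(2n+1)$-dimensional manifold $H_{reg}$, one concludes $\overline{H_{reg}}\subset\overline{\rho^{-1}(T)}$, which is the second alternative.

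I expect the main obstacle to lie in the second case, precisely in passing from ``$\F^{\imath}$ is a pull-back'' to ``$\overline{H_{reg}}$ is, up to closure, the pull-back of a semianalytic Levi-flat subset of $Y$''. This requires the $\F$-invariance of $\overline{H_{reg}}$, careful control of the indeterminacy locus of $\rho$ (and possibly replacing $\rho$ by its Stein factorization so that generic fibres are connected, to ensure that distinct Levi leaves map to distinct leaves of $\mathcal{G}$), and --- the genuinely delicate point --- verifying that $T=\overline{\rho(H_{reg})}$ is semianalytic rather than merely subanalytic, which is exactly where the Levi-flat structure and the two-dimensionality of $Y$ enter.
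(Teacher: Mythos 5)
Your proposal follows essentially the same route as the paper: Proposition~\ref{coherente} and Proposition~\ref{fol-inv} produce the $\F$-invariant $\imath$-complexification, Proposition~\ref{extension} algebraizes it via Chow, Proposition~\ref{singular_H} gives the first-integral/pull-back dichotomy, and Proposition~\ref{lebl_prop} handles the first branch. The only cosmetic difference is in the second branch, where you define $T$ globally as $\overline{\rho(H_{reg})}$ while the paper builds $T$ locally as a one-real-parameter union of closures of $\mathcal{G}$-leaves over a transversal to $\LL$ and then patches these pieces together; both constructions treat the semianalyticity of $T$ at essentially the same level of detail, so this is not a genuinely different argument.
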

\begin{proof}
By Proposition \ref{coherente}, there exist an open neighborhood $V\subset \Pn^{N}$ of $H$ and a unique irreducible complex subvariety $H^{\imath}$ of $V$ of complex dimension $n+1$ containing $H$. The Proposition \ref{fol-inv} implies that $H^{\imath}$ is invariant by $\F$ and moreover it extends algebraically to $\Pn^{N}$ by Proposition \ref{extension}.
We denote $\F^{\imath}:=\F|_{H^{\imath}}$ the restrict foliation to $H^{\imath}$. Observe now that $\F^{\imath}$ is a foliation of codimension one on $H^{\imath}$ which admit infinitely many quasi-invariant subvarieties of complex dimension $n-1$. Therefore, either $\F^{\iota}$ has a rational first integral in $H^{\imath}$, or   
$\F^{\iota}$ is a pull-back of a foliation on a projective surface under a dominant rational map by Proposition \ref{singular_H}.
\par If $\F^{\imath}$ has a first integral $R$ then there exists a real algebraic curve $C\subset\Pn^1$ such that $\overline{H_{reg}}\subset\overline{R^{-1}(C)}$ by Proposition \ref{lebl_prop}. Now if we assume that $\F^{\imath}$ is a pull-back of a foliation $\mathcal{G}$ on a projective complex surface $Y$ under a dominant rational map $\rho:H^{\imath}\dashrightarrow Y$. Then we can take $X=H^{\imath}$. Let us prove that there exists a semianalytic Levi-flat subset $T\subset Y$. Indeed, let $z\in H_{reg}\setminus Ind(\rho)$ (here $Ind(\rho)$ denotes the indeterminacy set of $\rho$). Then there exists a neighborhood $U\subset H^{\imath}\setminus Ind(\rho)$ of $z$ and a non-singular real analytic curve $\gamma:(-\epsilon,\epsilon)\to U$ such that $\gamma(0)=z$, $\{\gamma\}\subset H_{reg}$, and such that $\gamma$ is transverse to the Levi foliation $\LL$ on $H_{reg}$. Let $L_{\gamma(t)}$ be the leaf of $\LL$ through $\gamma(t)$.  
Since $L_{\gamma(t)}$ is also a leaf of $\F^{\imath}$ and $\F^{\imath}=\rho^{*}(\mathcal{G})$, then $\rho(L_{\gamma(t)})$ is a leaf of $\mathcal{G}$. Let us denote $A_t=\overline{\rho(L_{\gamma(t)})}\subset Y$ and define 
$$T_z:=\bigcup_{t\in(-\epsilon,\epsilon)}A_t\subset V_z,$$ where $V_z$ is a neighborhood of $T_z$ on $Y$.
Note that $T_z$ is a union of complex subvarieties parametrized by $t$ such that each $A_t$ contains leaves of $\mathcal{G}$, thus $T_z$ is a semianalytic Levi-flat subset on $V_z$.  These local constructions are sufficiently canonical to be patched together when $z$ varies on $H_{reg}$: if $T_{z_{1}}\subset V_{z_{1}}$ and $T_{z_{2}}\subset V_{z_{2}}$ are as above, with $V_{z_{1}}\cap V_{z_{2}}\neq \emptyset$, then $T_{z_{1}}\cap V_{z_{1}}\cap V_{z_{2}}$ and $T_{z_{2}}\cap V_{z_{1}}\cap V_{z_{2}}$ have some common leaves of $\mathcal{G}$ because $\mathcal{G}$ is a global foliation defined on $Y$, so $T_{z_{1}}$ and  $T_{z_{2}}$ can be glued by identifying these leaves. In this way, we get a semianalytic Levi-flat subset $T$ in $Y$. 
\par Finally, we assert that $\overline{H_{reg}}\subset \overline{\rho^{-1}(T)}$. In fact, let $w\in \overline{H_{reg}}$, then there exists a sequence $z_k\to w$, $z_k\in H_{reg}$, so $\rho(z_k)\in T$ which imply that $z_k\in \rho^{-1}(T)$ and $w\in \overline{\rho^{-1}(T)}$. This finishes the proof. 
\end{proof}
\section{Proof of Corollary \ref{corollary_1}}\label{coro_sec}
\begin{maincorollary}
Let $H\subset\mathbb{P}^N$, $N\geq 3$, be an irreducible coherent real analytic Levi-flat hypersurface invariant by a codimension one holomorphic foliation $\F$ on $\mathbb{P}^N$. If the Levi foliation has infinitely many quasi-invariant complex hypersurfaces, then either there exists a rational map $R:\Pn^N\dashrightarrow\Pn^{1}$, and real algebraic curve $C\subset\Pn^{1}$ such that $\overline{H_{reg}}\subset\overline{R^{-1}(C)}$ or there exists a dominant rational map $\rho:\Pn^N\dashrightarrow Y$ on a projective complex surface $Y$ and a semianalytic Levi-flat subset $T\subset Y$ such that $\overline{H_{reg}}\subset\overline{\rho^{-1}(T)}$.
\end{maincorollary}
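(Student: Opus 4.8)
The plan is to reduce the corollary to a direct application of Theorem \ref{Theorem_1} by identifying the ambient variety $X$ with $\Pn^N$ itself. A Levi-flat hypersurface $H\subset\Pn^N$ has real dimension $2N-1$, so its Levi dimension is $n=N-1$; then $n=N-1>N/2$ is equivalent to $N>2$, which holds since $N\geq 3$. Thus the dimension hypothesis of Theorem \ref{Theorem_1} is automatically satisfied. The quasi-invariant subvarieties of complex dimension $n=N-1$ are precisely the quasi-invariant complex hypersurfaces appearing in the statement, so the hypothesis ``infinitely many quasi-invariant complex hypersurfaces'' matches ``infinitely many quasi-invariant subvarieties of complex dimension $n$'' verbatim. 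First I would invoke Theorem \ref{Theorem_1} to obtain a unique projective subvariety $X$ of complex dimension $n+1=N$ containing $H$.

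The key point is that $X=\Pn^N$. Indeed, $X$ is the $\imath$-complexification $H^{\imath}$ of $H$ extended algebraically to $\Pn^N$ (via Propositions \ref{coherente}, \ref{fol-inv} and \ref{extension}, as in the proof of Theorem \ref{Theorem_1}): it is an irreducible complex algebraic subvariety of $\Pn^N$ of complex dimension $N$. Since $\Pn^N$ is irreducible of dimension $N$, any irreducible complex algebraic subvariety of the same dimension must equal $\Pn^N$. Hence $X=\Pn^N$, and the rational maps $R:X\dashrightarrow\Pn^1$ and $\rho:X\dashrightarrow Y$ furnished by Theorem \ref{Theorem_1} are rational maps $R:\Pn^N\dashrightarrow\Pn^1$ and $\rho:\Pn^N\dashrightarrow Y$. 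The two alternatives of Theorem \ref{Theorem_1} — existence of a real algebraic curve $C\subset\Pn^1$ with $\overline{H_{reg}}\subset\overline{R^{-1}(C)}$, or existence of a dominant rational map to a projective surface $Y$ together with a semianalytic Levi-flat subset $T\subset Y$ with $\overline{H_{reg}}\subset\overline{\rho^{-1}(T)}$ — then transcribe directly into the two alternatives of the corollary.

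There is essentially no obstacle here beyond bookkeeping: the only thing to check carefully is that the Levi dimension of a Levi-flat \emph{hypersurface} is $n=N-1$ and that the strict inequality $n>N/2$ is nonvacuous precisely because $N\geq 3$, so that Theorem \ref{Theorem_1} genuinely applies (for $N=2$ the inequality would fail, which is consistent with the fact that the classical codimension-one theory on surfaces is of a different nature). One should also note that the coherence hypothesis is carried over unchanged, and that the foliation $\F$ being codimension one in $\Pn^N$ means $\F$ is an $n$-dimensional foliation with $n=N-1$, matching the hypothesis of Theorem \ref{Theorem_1}. With these identifications the proof is immediate.

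\begin{proof}
Since $H\subset\Pn^N$ is a real analytic Levi-flat hypersurface, it has real dimension $2N-1$, so its Levi dimension is $n=N-1$. As $N\geq 3$, we have $n=N-1>N/2$, so the hypothesis $n>N/2$ of Theorem \ref{Theorem_1} holds. The foliation $\F$ has codimension one on $\Pn^N$, hence is an $n$-dimensional singular holomorphic foliation with $n=N-1$, and the quasi-invariant complex hypersurfaces of the Levi foliation are exactly its quasi-invariant subvarieties of complex dimension $n$. Thus all hypotheses of Theorem \ref{Theorem_1} are satisfied, and we obtain a unique projective subvariety $X$ of complex dimension $n+1=N$ containing $H$ such that either there is a rational map $R:X\dashrightarrow\Pn^1$ and a real algebraic curve $C\subset\Pn^1$ with $\overline{H_{reg}}\subset\overline{R^{-1}(C)}$, or there is a dominant rational map $\rho:X\dashrightarrow Y$ onto a projective surface $Y$ and a semianalytic Levi-flat subset $T\subset Y$ with $\overline{H_{reg}}\subset\overline{\rho^{-1}(T)}$.

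It remains to observe that $X=\Pn^N$. By construction (Propositions \ref{coherente}, \ref{fol-inv}, \ref{extension}), $X$ is the algebraic extension to $\Pn^N$ of the $\imath$-complexification $H^{\imath}$, hence an irreducible complex algebraic subvariety of $\Pn^N$ of complex dimension $N$. Since $\Pn^N$ is irreducible of dimension $N$, the only such subvariety is $\Pn^N$ itself, so $X=\Pn^N$. Substituting $X=\Pn^N$ into the two alternatives above yields exactly the two alternatives in the statement of the corollary.
\end{proof}
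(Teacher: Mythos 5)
Your proposal is correct and follows exactly the paper's own argument: check that the Levi dimension $n=N-1$ satisfies $n>N/2$ precisely when $N\geq 3$, apply Theorem \ref{Theorem_1}, and conclude $X=\Pn^N$ from the fact that $X$ is an $N$-dimensional projective subvariety of $\Pn^N$. No substantive differences.
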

\begin{proof}
If $H$ is an irreducible real analytic Levi-flat hypersurface in $\Pn^N$, $N\geq 3$, then the Levi dimension of $H$ is $N-1$. Moreover $$N-1>N/2\iff N>2.$$
Thus, we can apply Theorem \ref{Theorem_1} to $H$, so there exist a unique projective subvariety $X$ of complex dimension $N$ containing $H$ such that either there exists a rational map $R:X\dashrightarrow\C$, and real algebraic curve $C\subset\mathbb{C}$ such that $\overline{H_{reg}}\subset\overline{R^{-1}(C)}$ or there exists a dominant rational map $\rho:X\dashrightarrow Y$ on a projective complex surface $Y$ and a semianalytic Levi-flat subset $T\subset Y$ such that $\overline{H_{reg}}\subset\overline{\rho^{-1}(T)}$.
Since $X\subset\Pn^N$ has complex dimension $N$, we must have $X=\Pn^N$ and hence we conclude the proof. 
\end{proof}
\section{Examples}\label{example}
\begin{example}
We give an example of a real analytic Levi-flat hypersurface in $\Pn^3$
where Theorem \ref{Theorem_1} applies. Let $$H=\{[z_0:z_1:z_2:z_3]\in\Pn^3:z_0z_1\bar{z}_2\bar{z}_3-z_2z_3\bar{z}_0\bar{z}_1=0\},$$
then $H$ is Levi-flat because it is foliated by the complex hypersurfaces
\begin{eqnarray}\label{eq1}
z_0z_1=cz_2z_3,\quad\text{where}\quad c\in\mathbb{R}.
\end{eqnarray}
Let $\F$ be the codimension one holomorphic foliation on $\Pn^3$ of degree two defined by $$\omega=z_1z_2z_3dz_0+z_0z_2z_3dz_1-z_0z_1z_3dz_2-z_0z_1z_2dz_3,$$
then $\F$ has a rational first integral $R:\Pn^3\dashrightarrow \Pn^1$ given by $$R[z_0:z_1:z_2:z_3]=[z_0z_1:z_2z_3].$$
Since the leaves of $\F|_H$ coincide with the leaves of the Levi foliation (\ref{eq1}), $H$ must be invariant by $\F$. On the other hand, note that $H=\overline{R^{-1}(C)}$, where $$C=\{[t:u]\in\Pn^1:t\bar{u}-u\bar{t}=0\}.$$ 
\end{example}
\begin{example}
In the following example, we construct a real analytic Levi-flat hypersurface $H$ in $\Pn^3$ that is not a pull-back of a Levi-flat hypersurface of $\Pn^2$ under a rational map, furthermore, $H$ also is not a pull-back of a real algebraic curve under a meromorphic function.
\par Consider $z=(z_0,z_1,z_2,z_3)$,  $\bar{z}=(\bar{z}_0,\bar{z}_1,\bar{z}_2,\bar{z}_3)$ and
\[F(z,\bar{z})=\det\left( \begin{array}{cccccc}
z_{0} & z_{1} & z_{2} & z_3 & 0 & 0 \\
0 & z_{0} & z_{1} & z_{2} & z_3 & 0\\
0 & 0 & z_0 & z_1 & z_2 & z_3 \\
\bar{z}_{0} & \bar{z}_{1} & \bar{z}_{2} & \bar{z}_3 & 0 & 0 \\
0 & \bar{z}_{0} & \bar{z}_{1} & \bar{z}_{2} & \bar{z}_3 & 0\\
0 & 0 & \bar{z}_{0} & \bar{z}_{1} & \bar{z}_{2} & \bar{z}_3
\end{array} \right)\]
Define $H=\{[z_0:z_1:z_2:z_3]\in\Pn^3: F(z,\bar{z})=0\}$, $H$ is a real analytic hypersurface well defined since $F$ is a bihomogeneous polynomial of bi-degree $(3,3)$. Moreover, $H$ is Levi-flat, because it is foliated by the complex hyperplanes
\begin{eqnarray}\label{folhas}
z_0+cz_1+c^2 z_2+c^3 z_3=0,\quad\text{where}\quad c\in\mathbb{R}.
\end{eqnarray}
\par Let $\mathscr{W}$ be the codimension one holomorphic $3$-web on $\Pn^3$ given by the implicit differential equation $\Omega = 0$, 
\[\Omega=\det\left( \begin{array}{cccccc}
z_{0} & z_{1} & z_{2} & z_3 & 0 & 0 \\
0 & z_{0} & z_{1} & z_{2} & z_3 & 0\\
0 & 0 & z_0 & z_1 & z_2 & z_3 \\
dz_{0} & dz_{1} & dz_{2} & dz_3 & 0 & 0 \\
0 & dz_{0} & dz_{1} & dz_{2} & dz_3 & 0\\
0 & 0 & dz_{0} & dz_{1} & dz_{2} & dz_3
\end{array} \right)\]
Since the leaves of $\mathscr{W}|_{H}$ and $\LL$ are the same, we get $H$ is invariant by $\mathscr{W}$. 
\par Now, we prove that $H$ is not a pull-back of a Levi-flat hypersurface of $\Pn^2$. To prove this fact, we use the following result of \cite[Proposition 4.4]{junca}:
\begin{proposition}\label{junca_1}
Let $\omega_1$, $\omega_2$ and $\omega_3$ be independent germs of integrable 1-forms at $(\C^3,0)$ with singular sets of codimension at least two. Suppose that there exists
a non-zero holomorphic 2-form $\eta$, locally decomposable outside its singular set, that is tangent to each $\omega_i$, for $i=1,2,3$. Then $\omega_1$, $\omega_2$ and $\omega_3$ define foliations that are in a pencil. Furthermore, $\eta$ is integrable, defining the axis foliation of this pencil.
\end{proposition}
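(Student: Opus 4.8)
I read ``$\eta$ is tangent to $\omega_i$'' as $\eta\wedge\omega_i=0$ for $i=1,2,3$, and would argue in three steps.

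\emph{Step 1 (a common rank-two codistribution).} On the complement of $\sing(\eta)$ write $\eta=\alpha\wedge\beta$ with $\alpha,\beta$ holomorphic $1$-forms; in dimension three such a decomposition always exists. The relation $\alpha\wedge\beta\wedge\omega_i=0$ then forces each $\omega_i$ into the rank-two free module $\langle\alpha,\beta\rangle$ off the singular loci; equivalently, every plane field $\ker\omega_i$ contains the line field $\ell:=\ker\eta$. Because $\sing(\eta)$ and the $\sing(\omega_i)$ have codimension $\geq 2$, this rank-two subbundle of $\Omega^1$ (defined off $\sing(\eta)$) extends across, coincides with the saturation of $\mathcal{E}:=\langle\omega_1,\omega_2,\omega_3\rangle$, and contains each $\omega_i$ globally. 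Since the $\omega_i$ are independent, some pair of them generates $\mathcal{E}$ generically; relabelling, assume it is $\omega_1,\omega_2$ and write $\omega_3=f\,\omega_1+g\,\omega_2$ with $f,g$ holomorphic and $f\not\equiv 0\not\equiv g$ (otherwise two of the foliations would coincide).

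\emph{Step 2 (the pencil).} Put $\theta_0:=f\,\omega_1$ and $\theta_\infty:=g\,\omega_2$, two non-proportional holomorphic $1$-forms; then $\omega_1$, $\omega_2$, $\omega_3$ define the same foliations as the members $\theta_0$, $\theta_\infty$, $\theta_0+\theta_\infty$ (that is, $[1:0]$, $[0:1]$, $[1:1]$) of the pencil $\varpi_{[\lambda:\mu]}:=\lambda\,\theta_0+\mu\,\theta_\infty$. The one computation worth carrying out in full is the integrability of the whole pencil:
\[
\varpi_{[\lambda:\mu]}\wedge d\varpi_{[\lambda:\mu]}=\lambda^{2}\,\theta_0\wedge d\theta_0+\lambda\mu\,(\theta_0\wedge d\theta_\infty+\theta_\infty\wedge d\theta_0)+\mu^{2}\,\theta_\infty\wedge d\theta_\infty ,
\]
where $\theta_0\wedge d\theta_0=f^{2}\,\omega_1\wedge d\omega_1=0$, $\theta_\infty\wedge d\theta_\infty=g^{2}\,\omega_2\wedge d\omega_2=0$, and, expanding $\omega_3\wedge d\omega_3$ with $\omega_3=f\omega_1+g\omega_2$ and matching terms, one gets the identity
\[
\theta_0\wedge d\theta_\infty+\theta_\infty\wedge d\theta_0=\omega_3\wedge d\omega_3-f^{2}\,\omega_1\wedge d\omega_1-g^{2}\,\omega_2\wedge d\omega_2=0 .
\]
Hence $\varpi_{[\lambda:\mu]}\wedge d\varpi_{[\lambda:\mu]}=0$ for every $[\lambda:\mu]\in\mathbb{P}^{1}$: the integrability of $\omega_1,\omega_2,\omega_3$ is exactly what makes the pencil through them integrable, and the three foliations lie in it.

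\emph{Step 3 (the axis).} The base locus of $\{\varpi_{[\lambda:\mu]}\}$ is $\ker\theta_0\cap\ker\theta_\infty=\ker\omega_1\cap\ker\omega_2=\ell=\ker\eta$. In dimension three every decomposable $2$-form is integrable, so $\eta$ is integrable; and since $\theta_0\wedge\theta_\infty=fg\,\omega_1\wedge\omega_2$ is proportional, by a meromorphic function, to $\eta$ — because $\omega_1\wedge\omega_2=c\,\eta$ off the singular loci — the line field $\ker\eta$ equals $\ker\theta_0\cap\ker\theta_\infty$, that is, $\eta$ defines the axis foliation of the pencil. This finishes the argument. There is no single hard nucleus here: the proof rests on the algebraic identity of Step 2 together with the observation that tangency to a decomposable $\eta$ is the same as lying in a common rank-two codistribution. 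The points that genuinely require care are (i) the holomorphic bookkeeping across the codimension-$\geq 2$ singular loci — saturation of $\mathcal{E}$, the holomorphic (or controlled meromorphic) nature of $f$, $g$ and of the factor $c$ between $\eta$ and $\omega_1\wedge\omega_2$ — and (ii) checking that the pencil produced, and hence its axis $\eta$, does not depend on the auxiliary choices (the generating pair $\omega_1,\omega_2$ and the multiplicative normalisations), so that the conclusion is intrinsic.
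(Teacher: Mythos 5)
The paper does not actually prove this proposition: it is imported verbatim from Junca--Mol \cite[Proposition 4.4]{junca} and used as a black box in the second example of the last section, so there is no in-paper argument to measure yours against. Judged on its own, your proof is essentially correct and is surely the intended one in substance: tangency to the decomposable form $\eta=\alpha\wedge\beta$ places all three $\omega_i$ in the rank-two module $\langle\alpha,\beta\rangle$; hence $\omega_3$ is a combination of $\omega_1,\omega_2$; the quadratic expansion of $\varpi_{[\lambda:\mu]}\wedge d\varpi_{[\lambda:\mu]}$ in $(\lambda,\mu)$ has its two pure coefficients killed by the integrability of $\omega_1,\omega_2$ and its cross term killed by that of $\omega_3$; and the axis form $\theta_0\wedge\theta_\infty$ is proportional to $\omega_1\wedge\omega_2$, hence to $\eta$, whose kernel is a line field and therefore automatically integrable in dimension three.

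Two points that you flag yourself should be nailed down before the argument is complete. First, $f$ and $g$ are a priori only meromorphic (they are Cramer-type quotients), but this is harmless: write $f=a/c$, $g=b/c$ with $a,b,c$ holomorphic, replace $\omega_3$ by $c\,\omega_3$ and set $\theta_0=a\,\omega_1$, $\theta_\infty=b\,\omega_2$; every term of your Step 2 computation survives unchanged because $(a\omega_1)\wedge d(a\omega_1)=a^{2}\,\omega_1\wedge d\omega_1$ and $(c\omega_3)\wedge d(c\omega_3)=c^{2}\,\omega_3\wedge d\omega_3$. Second, you should state explicitly how you read ``independent'': it cannot mean pointwise linear independence, since that is incompatible with all three forms lying in a common rank-two codistribution; it must mean that the $\omega_i$ are pairwise non-proportional over the meromorphic functions, i.e.\ define three distinct foliations. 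That is exactly what guarantees $\omega_1\wedge\omega_2\not\equiv 0$ (so the pair generates the codistribution generically) and $a\not\equiv 0\not\equiv b$ (so $\omega_3$ is a genuinely new member of the pencil). With those two clarifications added, the proof stands.
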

\par Suppose by contradiction that $H$ is a pull-back of a Levi-flat hypersurface under a dominant rational map $\rho:\Pn^3\dashrightarrow\Pn^2$. Then pick a point $p\in U_0$, where $U_0$ is an open subset in $\Pn^3$ such that $\rho|_{U_0}:U_0\subset\C^3\to\C^2$ is a holomorphic submersion. We may have needed to perhaps move to yet another point $p'\in U_0$ such that $U_0$ does not intersect the discriminant set of the web $\mathscr{W}$. We set $p=p'$ and works in a neighborhood of $U_0$. Therefore, the germ of $\mathscr{W}$ at $p$ is a decomposable 3-web, defined by the superposition of three independent foliations $\F_1$, $\F_2$, and $\F_3$. We can assume that these foliations are defined by independent germs of integrable 1-forms $\omega_1$, $\omega_2$, and $\omega_3$ respectively. Since $H$ is given by a pull-back, all the leaves of $\LL$ and, hence the leaves of $\mathscr{W}$ in $H\cap U_0$ are tangent to the fibers of $\rho|_{U_{0}}$, these fibers define a non-zero holomorphic 2-form $\eta_{\rho}$ that is tangent to each $\omega_i$, for $i=1,2,3$. Then, according to Proposition \ref{junca_1}, $\omega_1$, $\omega_2$, and $\omega_3$ define foliations that are in a pencil, an absurd. Hence, the assertion is proved.
\par Now we assert that $H$ is not a pull-back of a real algebraic curve under a meromorphic function. In fact, $H$ is a Levi-flat hypersurface in $\Pn^3$ such that there does not exist a point contained in infinitely many leaves of $\LL$, because, the leaves of $\LL$ are given by the equation (\ref{folhas}) and through at a point only pass three leaves. If $H$ is defined by a pull-back of a  meromorphic function, there has to exist a point $p$ of indeterminacy since the dimension is at least 2. Then through at $p$ pass infinitely many leaves of $\LL$. Since $H$ does not satisfy this property, we finish the proof of the assertion.
 
\end{example}

\vspace{1cm}
\noindent {\bf Acknowledgments.}
 The authors wish to express his gratitude to Maria Aparecida Soares Ruas (ICMC - USP, S\~ao Carlos) and Judith Brinkschulte (Universit\"{a}t Leipzig) for many valuable conversations and suggestions.
 %The research leading to these results has received funding from the CNPq Brazil grant number 427388/2016-3 and Concytec Per\'u.  

\end{document}